\chardef\bslash=`\\ 
\DeclareSymbolFont{symbolsC}{U}{txsyc}{m}{n}
\DeclareMathSymbol{\varparallelinv}{\mathrel}{symbolsC}{10}
\newtheorem{thm}{Theorem}[section]
\newtheorem{lem}[thm]{Lemma}
\newtheorem{prop}[thm]{Proposition}
\theoremstyle{definition}
\newtheorem{defn}[thm]{Definition}
\newtheorem{rem}[thm]{Remark}
\theoremstyle{remark}
\newcommand{\contract}{\mathord{\varparallelinv}} 
\newcommand{\eval}[2][\right]{\relax
  \ifx#1\right\relax \left.\fi#2#1\rvert}
\begin{document}

\title{Counting geodesics between surface~triangulations}

\author[H. Parlier]{Hugo Parlier}
\address{Department of Mathematics, FSTM,\\University of Luxembourg,\\Esch-sur-Alzette, Luxembourg}
\email{hugo.parlier@uni.lu}

\author[L. Pournin]{Lionel Pournin}
\address{Universit{\'e} Paris 13, Villetaneuse, France}
\email{lionel.pournin@univ-paris13.fr}

\begin{abstract}
Given a surface $\Sigma$ equipped with a set $P$ of marked points, we consider the triangulations of $\Sigma$ with vertex set $P$. The flip-graph of $\Sigma$ whose vertices are these triangulations, and whose edges correspond to flipping arcs appears in the study of moduli spaces and mapping class groups. We consider the number of geodesics in the flip-graph of $\Sigma$ between two triangulations as a function of their distance. We show that this number grows exponentially provided the surface has enough topology, and that in the remaining cases the growth is polynomial.
\end{abstract}
\maketitle

\section{Introduction}\label{PP4.sec.0}

The number of geodesics between two points in a metric space appears in many contexts, and is related to understanding the dynamical, geometric and topological properties of this space. In particular, for compact manifolds, the problem of counting closed geodesics has many facets and is related to the volume entropy. These notions enjoy counterparts in combinatorial settings and in particular for graphs \cite{Balacheff2007}. Certain manifolds, such as the standard round sphere or Euclidean spaces, enjoy ``blocking'' properties, meaning that removing a small number of points (or cutting away a small volume submanifold) can block all the geodesics between two points (see for instance \cite{GutkinSchroeder2006, SchmidtSouto2010}). The unicity, or local unicity, of geodesics between points is a feature of non-positive curvature, but it can fail even in coarse settings. The curve graph is a relevant example: while it is Gromov hyperbolic \cite{MasurMinsky1999}, there are infinitely-many geodesic paths between pairs of vertices at distance $2$. In the related pants graph however \cite{AramayonaParlierShackleton2008}, it is still unclear whether a similar property holds.

Flip-graphs are naturally associated to finite-type surfaces, just like curve graphs or pants graphs, and we explore the problem of counting the geodesics between their vertices.
More precisely, we consider an orientable surface $\Sigma$ of genus $g$, with $b$ boundary curves, and $p$ punctures. We equip $\Sigma$ with finitely-many marked points: each puncture of $\Sigma$ is a marked point and all the other marked points of $\Sigma$ are placed in its boundary curves in such a way that each boundary curve of $\Sigma$ contains at least one marked point. A triangulation of $\Sigma$ can be thought of as a set of pairwise non-crossing arcs between the marked points of $\Sigma$ that decompose $\Sigma$ into triangles (see Section \ref{PP4.sec.1} for a precise definition). The flip-graph of $\Sigma$ is the graph $\mathcal{F}(\Sigma)$ whose vertices are the triangulations of $\Sigma$ and whose edges connect two triangulations that differ by exactly one arc. We think of $\mathcal{F}(\Sigma)$ as a metric graph whose edges have length $1$.  

The vertex degrees of $\mathcal{F}(\Sigma)$ are bounded and with the exception of certain topological types, $\mathcal{F}(\Sigma)$ is infinite. These flip-graphs are related to the study of moduli spaces and mapping class groups~\cite{DisarloParlier2019}. Polygons form a prominent example of surfaces whose flip-graphs are finite. In this case, the marked points are the vertices of the polygon. The resulting flip-graphs are the $1$-skeletons of associahedra~\cite{Lee1989,Stasheff1963a,Stasheff1963b,Tamari1951} and their geometric properties are well-studied \cite{Pournin2014,PourninWang2021,SleatorTarjanThurston1988}. The punctured disks with marked points on the boundary are another example of surfaces whose flip-graphs are finite \cite{ParlierPournin2018b}. The only other surface whose flip-graph is finite is the $3$-punctured sphere (that flip-graph is depicted in Figure \ref{PP4.sec.1.fig.1}). Here, we are primarily interested in infinite flip-graphs, whose lengths of geodesics can get arbitrarily large.

The quantity we study in this article is the largest number $\Delta_k(\Sigma)$ of geodesics in $\mathcal{F}(\Sigma)$ between any two vertices at distance $k$. The vertex degrees of $\mathcal{F}(\Sigma)$ being uniformly bounded implies that $\Delta_k(\Sigma)$ is always finite. More precisely, there is an immediate upper bound on $\Delta_k(\Sigma)$ which grows exponentially as a function of $k$ (with the degree minus $1$ as a basis for the exponential). Our first theorem identifies when there is a matching exponential lower bound. 

\begin{thm}\label{PP4.sec.0.thm.1}
If $b$ and $p$ are not both equal to $0$ and if $2g+p+b$ is at least $3$, then the growth of $\Delta_k(\Sigma)$ as a function of $k$ is at least exponential as $k$ tends to infinity, except possibly when
\begin{itemize}
\item[(i)] $\Sigma$ is the $4$-punctured sphere ($g=b=0$ and $p=4$),
\item[(ii)] $\Sigma$ is a genus one surface with a unique boundary component and no puncture ($g=b=1$ and $p=0$), or
\item[(iii)] $\Sigma$ is a $2$-punctured disk ($g=0$, $b=1$, and $p=2$).
\end{itemize}
\end{thm}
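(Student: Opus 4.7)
The plan is to construct, for arbitrarily large $k$, a pair of triangulations of $\Sigma$ at flip-distance exactly $k$ with at least $c^{k}$ distinct geodesics between them in $\mathcal{F}(\Sigma)$, for some $c>1$ depending only on the topological type of $\Sigma$.

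The first move is a reduction to essential subsurfaces. Given an essential subsurface $\Sigma_{0}\subset\Sigma$ with marked points and a fixed triangulation $\tau$ of the complement $\Sigma\setminus\Sigma_{0}$, the triangulations of $\Sigma$ that restrict to $\tau$ form a subgraph of $\mathcal{F}(\Sigma)$ naturally isomorphic to $\mathcal{F}(\Sigma_{0})$. Using a standard arc-intersection lower bound for flip distance, this subgraph is isometrically embedded: no shortcut through $\Sigma\setminus\Sigma_{0}$ can reduce the distance between two triangulations that differ only on $\Sigma_{0}$. Thus $\Delta_{k}(\Sigma_{0})\leq \Delta_{k}(\Sigma)$, and it suffices to exhibit exponential growth on a conveniently chosen ``minimal'' subsurface.

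Next, I would carry out a case analysis by topological type $(g,b,p)$, producing in each admissible case a minimal subsurface $\Sigma_{0}$ that supports the construction. The exclusions (i)--(iii) correspond precisely to the surfaces whose topology does not accommodate such a subsurface --- for instance, those admitting no two disjoint non-isotopic essential simple closed curves, or no embedded twice-punctured disk with sufficiently many boundary marked points. In each minimal $\Sigma_{0}$, the construction proceeds by realizing a power $\phi^{n}$ of a suitable mapping class element --- typically a Dehn twist along an essential simple closed curve --- as a flip sequence of length linear in $n$, which is itself a geodesic by a monotone arc-intersection argument. Along this flip sequence, one identifies $\Omega(n)$ steps at which a commuting alternative flip supported in a disjoint region can be interleaved with a later undo; each such insertion produces an independent binary choice that preserves both the endpoints and the geodesic property, yielding at least $2^{\Omega(n)}$ geodesics at distance $\Theta(n)$.

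The core technical obstacle is proving that these $\Omega(n)$ binary choices produce pairwise distinct geodesics, rather than collapsing under further hidden commutations of flip sequences. A clean way to handle this is to introduce a combinatorial invariant of flip sequences --- such as a signed record of the arcs flipped, or the sequence of intersection patterns witnessed along the flip --- and to verify that this invariant distinguishes the chosen branches faithfully. A secondary but unavoidable obstacle is the case analysis itself: confirming rigorously that (i)--(iii) exhaust the topological obstructions to the construction requires inspecting a small number of borderline $(g,b,p)$ triples individually, and showing that in each such case at least one of the two ingredients (a Dehn twist source of long geodesics, and a disjoint gadget for branching) is available.
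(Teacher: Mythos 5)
Your reduction step (restricting to triangulations that contain a fixed triangulation of the complement of a subsurface, and arguing this gives an isometrically embedded copy of $\mathcal{F}(\Sigma_0)$, hence $\Delta_k(\Sigma_0)\leq\Delta_k(\Sigma)$) is sound in substance and is exactly what the paper does, invoking strong convexity of such subgraphs. The genuine gap is in your branching mechanism. You propose to take a long geodesic (a flip sequence realizing a power of a Dehn twist) and, at $\Omega(n)$ steps, ``interleave a commuting alternative flip supported in a disjoint region with a later undo.'' Any path obtained by inserting a flip and its inverse into a geodesic has length strictly greater than the distance between the endpoints, so it is not a geodesic; since all geodesics between two fixed triangulations have the same length, no insert-and-undo scheme can produce new geodesics at all. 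The exponential count must instead come from endpoints that genuinely differ by two long flip sequences supported in disjoint regions, with the geodesics being the interleavings (shuffles) of the two sequences. This is what the paper does: it exhibits a strongly convex subgraph of $\mathcal{F}(\Sigma)$ isomorphic to $\mathcal{F}(\Gamma)\times\mathcal{F}(\Gamma)$, where $\Gamma$ is the one-marked-point-per-boundary cylinder whose flip-graph is a bi-infinite path, so the product is the grid $\mathbb{Z}^2$ and the number of geodesics between $(0,0)$ and $(i,j)$ is $\binom{i+j}{i}$, which is of order $2^k/\sqrt{k}$ for $i\approx j\approx k/2$. Once you count shuffles of vertex sequences, your worry about ``hidden commutations collapsing branches'' disappears: distinct interleavings are distinct vertex paths by definition.

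A secondary issue is that your source of long geodesics is harder than necessary: proving that a flip sequence realizing $\phi^n$ for a Dehn twist is geodesic via a ``monotone arc-intersection argument'' is a nontrivial lower-bound problem that the paper avoids entirely, since in the grid $\mathbb{Z}^2$ geodesicity of monotone lattice paths is immediate and strong convexity (from Disarlo--Parlier) transfers it to $\mathcal{F}(\Sigma)$. Finally, your claim that the exceptions (i)--(iii) are exactly the types not admitting the required subsurface is stated but not argued; the paper needs six separate lemmas (covering many punctures, genus at least two, several boundary components, punctured disks, the once-punctured torus with extra punctures, etc.) to find, in every non-exceptional type, an essential arc cutting off two disjoint subsurfaces each supporting a strongly convex copy of $\mathcal{F}(\Gamma)$, and this case analysis is where most of the actual work lies.
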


We also investigate the cases that are not covered by Theorem~\ref{PP4.sec.0.thm.1}. Figure~\ref{PP4.sec.1.fig.2} provides a visual diagram of when the exponential behavior kicks in for the low complexity cases. As will be discussed in the sequel, some cases are very easy (namely when the graphs are finite) and in a select few number of cases, we were unable to determine the growth rate with our methods. 

\begin{figure}[b]
\begin{centering}
\includegraphics[scale=1]{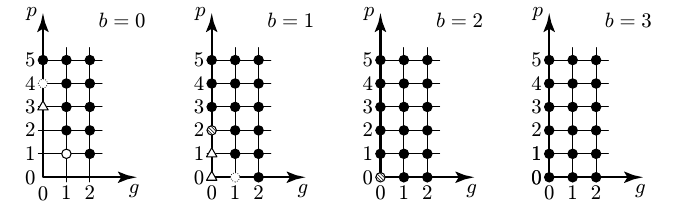}
\caption{The behavior of $\Delta_k(\Sigma)$ when $b\leq3$, $g\leq2$ and $p\leq5$. Symbols indicate the behavior of $\Delta_k(\Sigma)$ as $k$ goes to infinity: solid disks indicate exponential behavior, striped disks polynomial behavior, solid circles that $\Delta_k(\Sigma)=1$, and dotted circles represent unsolved cases. Triangles mean that $\mathcal{F}(\Sigma)$ is finite. When $b=0$ and there is no symbol, $\mathcal{F}(\Sigma)$ is empty.}\label{PP4.sec.1.fig.2}
\end{centering}
\end{figure}

When $\Sigma$ is a cylinder without punctures, we show the following.
\begin{thm}\label{PP4.sec.0.thm.2}
If $\Sigma$ is a cylinder without punctures, then the growth of $\Delta_k(\Sigma)$ as a function of $k$ is at most polynomial as $k$ goes to infinity.
\end{thm}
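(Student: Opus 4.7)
The plan is to exploit the essentially one-dimensional nature of the flip graph of the cylinder. Since $\pi_1(\Sigma)\cong\mathbb Z$, every isotopy class of arc has a well-defined winding number around the core curve of $\Sigma$. I would begin by showing that every triangulation $T$ of $\Sigma$ must contain at least one \emph{cross-arc}, i.e., an arc joining the two boundary components of $\Sigma$, as otherwise the complement of the arcs of $T$ would contain a non-disk region and could not be triangulated. Moreover, since the cross-arcs in $T$ are pairwise non-crossing, their winding numbers form a set of consecutive integers whose cardinality is bounded by the number of marked points of $\Sigma$, and the remaining \emph{peripheral} arcs fit into the polygonal regions cut out by the cross-arcs.

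Next I would introduce the integer invariant $\omega(T)$ equal to the smallest winding number of a cross-arc of $T$. A case analysis of the two types of flips (flipping a cross-arc versus flipping a peripheral arc) shows that $|\omega(T)-\omega(T')|\le 1$ whenever $T$ and $T'$ are adjacent in $\mathcal F(\Sigma)$. The Dehn twist $\tau$ around the core curve of $\Sigma$ acts on $\mathcal F(\Sigma)$ by graph automorphisms, shifting $\omega$ by $+1$. Since only finitely many triangulations have any given value of $\omega$ (the cross-arcs being confined to a bounded range of windings and the peripheral arcs to the resulting polygonal regions), the quotient $\mathcal F(\Sigma)/\langle\tau\rangle$ is a finite graph and $\mathcal F(\Sigma)$ is a $\mathbb Z$-cover of it. In particular, balls of radius $k$ in $\mathcal F(\Sigma)$ contain only $O(k)$ vertices.

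The main step is then to turn this ``linear width'' picture into a polynomial bound on $\Delta_k(\Sigma)$. I would argue via a commutation/rearrangement procedure that, in any geodesic between $T$ and $T'$, the number of peripheral flips is bounded by a universal constant depending only on $\Sigma$. The idea is that a peripheral flip affects only one of the polygonal regions between consecutive cross-arcs, so it can be freely permuted past any cross-arc flip touching a different region; and any two peripheral flips in the same region that cancel out can be removed to produce a strictly shorter flip sequence. Once the number of peripheral flips is capped, cross-arc flips account for essentially all of the $k$ steps of a geodesic and must be nearly monotone in $\omega$. A geodesic is then determined up to polynomial ambiguity by the nearly-monotone sequence of cross-arc flips (which has polynomially many realisations because each $\omega$-level supports only $O(1)$ triangulations) together with the positions of the $O(1)$ peripheral flips among the $k$ steps, contributing a factor of $O(k^{O(1)})$.

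The most delicate part of the plan is the commutation argument that caps the number of peripheral flips in a geodesic, together with the fact that the cross-arc flips of a geodesic are essentially unique once their positions are fixed. This rigidity is precisely what distinguishes the puncture-free cylinder from the exceptional cases of Theorem~\ref{PP4.sec.0.thm.1}: adding a puncture or handle would allow peripheral flips to couple nontrivially with cross flips and would open up the exponentially many geodesic configurations seen in the general case.
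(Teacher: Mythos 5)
Your coarse set-up is sound and agrees with what the paper obtains by other means: every triangulation of the cylinder contains a cross-arc, the twist parameter $\omega$ changes by at most one under a flip, $\mathcal{F}(\Sigma)$ is a $\mathbb{Z}$-cover of a finite graph under the Dehn twist, and balls of radius $k$ have $O(k)$ vertices (compare Lemma~\ref{PP4.sec.1.5.lem.3} and Proposition~\ref{PP4.sec.3.prop.1}). The gap lies in the two steps that carry all the weight. First, your dichotomy between ``cross-arc flips'' and ``peripheral flips'' is not stable: a flip whose quadrilateral has three vertices on one boundary component exchanges a cross-arc for a peripheral arc, and these mixed flips are precisely the moves a geodesic uses to ``park'' marked points behind peripheral arcs before twisting and to ``unpark'' them afterwards. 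Such a flip changes the very decomposition into polygonal regions between consecutive cross-arcs, so the commutation argument---which assumes the regions are fixed and that a peripheral flip only touches one of them---does not go through as stated; and removing cancelling pairs only eliminates literal flip/inverse-flip repetitions, which does not cap the number of pairwise distinct peripheral flips a geodesic might perform. What is really needed is an additive lower bound of the form $d(T,T')\ge(\text{twisting cost})+(\text{peripheral cost})$, and producing such a bound is exactly what the contraction machinery of Lemmas~\ref{PP4.sec.1.lem.1} and~\ref{PP4.sec.1.5.lem.1.2} is engineered for; your sketch does not supply a substitute.

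Second, even granting boundedly many peripheral flips and $O(1)$ triangulations per $\omega$-level, linear growth of balls does not by itself yield polynomially many geodesics: the graph on $\mathbb{Z}\times\{0,1\}$ with all four edges between consecutive levels is quasi-isometric to $\mathbb{Z}$, has linear balls, yet has $2^{k-1}$ geodesics between suitable vertices at distance $k$. So the ``near-uniqueness of the cross-arc flip sequence once the positions of the exceptional flips are fixed'' is not bookkeeping but the actual theorem to be proved, and you acknowledge it without proving it. The paper avoids both difficulties by a different route: Theorem~\ref{PP4.sec.1.5.lem.2} shows that along any geodesic at most $2\kappa(\Sigma)-4$ flips are incident to a fixed non-loop boundary arc; geodesics are then encoded by the boundedly many triangulations at which such flips occur, each ranging over a ball of radius $k$ (whence the factor $\Lambda_k$ in Lemma~\ref{PP4.sec.1.5.lem.4}), and induction on contractions reduces to the surface $\Gamma$, whose flip-graph is a line, giving Theorem~\ref{PP4.sec.3.thm.1}. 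To make your direct approach work you would have to prove rigidity statements for the cylinder of comparable strength to these lemmas.
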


Note that, when $\Sigma$ is a cylinder without punctures, $b=2$ and $g=p=0$ and thus $2g+p+b$ is at most $2$. This explains why this case is not listed as an exception in the statement of Theorem \ref{PP4.sec.0.thm.1}.

We also study the case of a doubly punctured disk.

\begin{thm}\label{PP4.sec.0.thm.3}
If $\Sigma$ is a $2$-punctured disk, then the growth of $\Delta_k(\Sigma)$ as a function of $k$ is at most polynomial as $k$ goes to infinity.
\end{thm}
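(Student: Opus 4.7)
The strategy is parallel in spirit to the cylinder case of Theorem~\ref{PP4.sec.0.thm.2}: in both settings the flip-graph is infinite only because an arc may wind unboundedly many times around a non-trivial homotopy class, and the plan is to argue that most flips along a geodesic are forced by this winding data. For a $2$-punctured disk $\Sigma$, the relevant invariant is the pair of winding numbers of each arc around the two punctures $p_1$ and $p_2$. The plan is to associate to each triangulation $T$ of $\Sigma$ a finite combinatorial type together with a tuple of integer winding numbers, show that the distance in $\mathcal{F}(\Sigma)$ between two triangulations dominates the total variation of these winding numbers, and then use this to bound the number of geodesics by a polynomial in $k$.

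First I would fix a system of reference curves, for instance the simple closed curve $\gamma$ separating $p_1$ and $p_2$ together with an arc from a chosen boundary marked point to each puncture, with respect to which the winding numbers are measured. Given an arc $\alpha$, its geometric intersection number with $\gamma$ records how many times $\alpha$ wraps around one puncture relative to the other. Summing these intersection numbers over the arcs of a triangulation $T$ yields a non-negative integer $w(T)$ which grows without bound on $\mathcal{F}(\Sigma)$, and a single flip changes $w$ by at most a constant $C$, so $d(T,T') \geq |w(T)-w(T')|/C$. Next I would study how flips interact with the winding data and show that, once the underlying finite combinatorial type of the triangulation has stabilised, the winding parameters can only be modified one at a time and in a forced direction: to decrease the number of times a spiralling arc wraps around a puncture, one must flip it against the unique neighbour that unwinds it by one turn, and the identity of that neighbour is determined by the local structure of the triangulation. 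Consequently any long geodesic decomposes as a concatenation of \emph{unwinding segments}, each of which is unique or has only boundedly many continuations, interspersed with short \emph{reconfiguration segments} that take place inside a fixed finite subgraph of $\mathcal{F}(\Sigma)$ capturing the possible finite combinatorial types.

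The counting argument is then combinatorial: a geodesic of length $k$ contains at most $O(k)$ reconfiguration steps, and the number of ways of interleaving a bounded number of independent unwinding processes whose total length is $k$ is polynomial in $k$, in the same way that the number of monotone lattice paths of length $k$ in a fixed-dimensional integer cone is polynomial. The main obstacle I anticipate is the forcing step: rigorously formalising what it means for an unwinding flip to be ``forced''. This requires a careful case analysis of the possible local configurations near a spiralling arc, together with a lemma showing that any flip sequence which delays an unwinding flip, or which interleaves unrelated unwindings in the wrong order, fails to be shortest. Establishing this forcing lemma for the $2$-punctured disk is the technical heart of the argument; once it is in hand, polynomial growth of $\Delta_k(\Sigma)$ follows by the counting of interleavings sketched above.
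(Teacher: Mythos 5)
Your outline leaves its self-declared ``technical heart'' --- the forcing lemma --- unproven, and that is a real gap; but there is a more serious problem: even granting a forcing lemma in the form you state it, your final counting step rests on a false combinatorial claim. The number of ways of interleaving a bounded number of \emph{independent} unwinding processes of total length $k$ is not polynomial in $k$: it is a multinomial coefficient $k!/(k_1!\cdots k_r!)$, which is exponential in $k$ as soon as two of the processes have length proportional to $k$ (the number of monotone lattice paths in $\mathbb{Z}^2$ from the origin to $\bigl(\lfloor k/2\rfloor,\lceil k/2\rceil\bigr)$ is $\binom{k}{\lfloor k/2\rfloor}\sim 2^k/\sqrt{k}$). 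This is precisely the mechanism behind the \emph{exponential} lower bound of Theorem \ref{PP4.sec.0.thm.1}: a strongly convex copy of $\mathcal{F}(\Gamma)\times\mathcal{F}(\Gamma)$ provides two freely interleavable winding directions and hence exponentially many geodesics. The whole point of the $2$-punctured disk case is to show that its two winding parameters \emph{cannot} be varied independently along a geodesic; your scheme takes the interleavings for granted and then miscounts them, so as written it would ``prove'' polynomial growth for every surface, contradicting Theorem \ref{PP4.sec.0.thm.1}.

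For comparison, the paper's route is: Theorem \ref{PP4.sec.1.5.thm.1}, proved via the contraction machinery (Lemmas \ref{PP4.sec.1.5.lem.1.2}--\ref{PP4.sec.1.5.lem.4}), reduces everything to the surface $\Sigma^\star$ with a single marked point on the boundary, so the unbounded polygon-like freedom coming from extra boundary marked points is handled once and for all and no winding bookkeeping on the original $\Sigma$ is needed. For $\Sigma^\star$ the flip-graph is analysed explicitly (Figure \ref{PP4.sec.1.fig.7}): the triangulations containing the unique arc $\varepsilon$ joining the two punctures form a strongly convex subgraph $\mathcal{G}$ isomorphic to the flip-graph of a punctureless cylinder, hence with polynomially many geodesics by Theorem \ref{PP4.sec.0.thm.2}, while the complementary subgraph $\mathcal{G}'$ is a bi-infinite path in which a geodesic can spend at most $6$ consecutive steps (Proposition \ref{PP4.sec.3.prop.2}); so all but boundedly many steps of any geodesic lie in $\mathcal{G}$. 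Strong convexity of $\mathcal{G}$ is the rigorous substitute for your ``forcing'' idea: it confines long geodesics to an essentially one-dimensional piece of the graph, and it is this one-dimensionality --- not the boundedness of the number of unwinding processes --- that yields the polynomial count. To salvage your approach you would need to prove a statement of that strength, namely that the winding data around the two punctures are coupled along any geodesic; that is where all the work lies.
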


In fact Theorems \ref{PP4.sec.0.thm.2} and \ref{PP4.sec.0.thm.3} will be obtained as a consequence of a more general result. We will show that, as $k$ goes to infinity, $\Delta_k(\Sigma)$ depends only polynomially on two quantities related to the surface $\Sigma^\star$ obtained from $\Sigma$ by removing all but one marked points on each boundary curve. The first of these quantities, $\widetilde{\Delta}_k(\Sigma^\star)$ is the maximum value of $\Delta_i(\Sigma^\star)$ when $i$ ranges between $1$ and $k$. The second quantity, $\Lambda_k(\Sigma^\star)$ is the maximum number of triangulations in any ball of radius $k$ in $\mathcal{F}(\Sigma^\star)$.

\begin{thm}\label{PP4.sec.0.thm.4}
When $k$ goes to infinity, $\Delta_k(\Sigma)$ is upper bounded by a polynomial function of $\widetilde{\Delta}_k(\Sigma^\star)$ and $\Lambda_k(\Sigma^\star)$. 
\end{thm}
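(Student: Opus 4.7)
The plan is to construct a canonical ``forgetting'' map $\pi \colon \mathcal{F}(\Sigma) \to \mathcal{F}(\Sigma^\star)$ that removes the extra marked points on the boundary curves of $\Sigma$, and then to count geodesics in $\mathcal{F}(\Sigma)$ in terms of their images together with some auxiliary local data. Concretely, for $T \in \mathcal{F}(\Sigma)$ and each extra marked point $v$ on a boundary curve, the arcs of $T$ incident to $v$ form a fan of triangles bounded by the two boundary arcs meeting at $v$; I would collapse $v$ onto a designated boundary neighbor and replace the fan by a canonical triangulation of the merged region. Iterating over all extra marked points produces $\pi(T) \in \mathcal{F}(\Sigma^\star)$. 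The key preliminary is to verify that if $T$ and $T'$ differ by a single flip in $\mathcal{F}(\Sigma)$, then either $\pi(T) = \pi(T')$ or $\pi(T)$ and $\pi(T')$ are adjacent in $\mathcal{F}(\Sigma^\star)$. This partitions the flips of $\mathcal{F}(\Sigma)$ into \emph{internal} flips (which preserve the projection and correspond to local polygon flips near the boundary) and \emph{external} flips (which induce a flip in $\mathcal{F}(\Sigma^\star)$).

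Given a geodesic $T_0, \dots, T_k$ in $\mathcal{F}(\Sigma)$, the projected sequence $\pi(T_0), \dots, \pi(T_k)$, after removing consecutive duplicates, becomes a walk of some length $m \leq k$ in $\mathcal{F}(\Sigma^\star)$ from $\pi(T_0)$ to $\pi(T_k)$, lying entirely in the ball of radius $k$ around $\pi(T_0)$. Because $T_0, \dots, T_k$ is a geodesic, I expect to show that this projected walk decomposes into a number of geodesic subpaths of $\mathcal{F}(\Sigma^\star)$ that is bounded in terms of the topology of $\Sigma$ alone. Each geodesic subpath is then one of at most $\widetilde{\Delta}_k(\Sigma^\star)$ possibilities, and each of its endpoints lies among at most $\Lambda_k(\Sigma^\star)$ vertices, so the total number of possible projected walks is polynomially bounded in $\widetilde{\Delta}_k(\Sigma^\star)$ and $\Lambda_k(\Sigma^\star)$.

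To reconstruct the full geodesic from its projection, two further pieces of data are needed: the positions of the $k - m$ internal flips among the $k$ steps, contributing a factor at most $\binom{k}{m}$; and the specific internal flip performed at each such step. Every internal flip takes place inside one of the polygonal regions created by the fans at extra marked points, whose combinatorial complexity depends only on the topology of $\Sigma$. The number of valid internal flip sequences is therefore polynomial in $k$, using known polynomial bounds on geodesic counts in associahedra~\cite{Pournin2014,PourninWang2021}. Since $\Lambda_k(\Sigma^\star)$ grows at least linearly in $k$ for the flip-graphs that actually arise, the polynomial-in-$k$ factors are absorbed into the final polynomial bound.

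The hard part will be the second step: proving that the projected walk decomposes into a constant (topology-dependent) number of geodesic subpaths of $\mathcal{F}(\Sigma^\star)$. A priori, two consecutive external flips could backtrack in $\mathcal{F}(\Sigma^\star)$ even though no backtracking occurs in $\mathcal{F}(\Sigma)$ itself. Ruling this out will require a careful local analysis of how external flips interact with the fan structures near the extra marked points: any inefficiency at the level of $\pi$ should translate into an inefficiency of the original geodesic at the level of $\mathcal{F}(\Sigma)$, which the geodesic hypothesis forbids. A secondary technical point is to choose the definition of $\pi$ canonically enough that every flip in $\mathcal{F}(\Sigma)$ splits cleanly into either an external or an internal flip, with no ``mixed'' behavior.
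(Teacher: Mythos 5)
The central gap is quantitative: your counting scheme only works if the number of ``internal'' flips along a geodesic of $\mathcal{F}(\Sigma)$ is bounded by a constant depending only on the topology of $\Sigma$, and you never establish (or even state) such a bound. As written you allow $k-m$ internal flips with $m$ unrestricted, and the interleaving factor $\binom{k}{m}$ is then exponential in $k$ (take $m\approx k/2$), not polynomial; similarly, the number of possible sequences of internal flips is of order $C^{\,k-m}$ for a constant $C$, and the ``known polynomial bounds on geodesic counts in associahedra'' you invoke do not exist in the form you need: already $m$ disjoint flippable quadrilaterals give $m!$ geodesics at distance $m$, so geodesic counts in associahedra are not polynomial in the distance. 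The same missing bound is what your flagged ``hard part'' requires: without it there is no control on the number of maximal geodesic subpaths into which the projected walk decomposes. This is precisely the content of the paper's key estimate (Theorem~\ref{PP4.sec.1.5.lem.2}): along any geodesic in $\mathcal{F}(\Sigma)$, at most $2\kappa(\Sigma)-4$ flips are incident to a given non-loop boundary arc $\alpha$. That statement is proved by combining the contraction inequality of Lemma~\ref{PP4.sec.1.lem.1} with the estimate $d\bigl(T,\psi_{x,\alpha}^{-1}(T\contract\alpha)\bigr)\leq\mathrm{deg}_T(x)-2$ of Lemma~\ref{PP4.sec.1.5.lem.1.2} and the triangle inequality; your proposal contains no substitute for this argument, and it is the heart of the theorem.

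On the level of strategy, the paper also avoids your one-shot collapsing map $\pi$ with a canonical re-triangulation of the fans, which raises exactly the well-definedness issue you mention at the end (a single flip near a fan can change the canonically re-triangulated region by more than one flip). Instead it removes one extra boundary marked point at a time via the contraction $T\mapsto T\contract\alpha$ and induces on the number of boundary arcs. For a fixed geodesic it records the at most $2\kappa(\Sigma)-1$ triangulations at which $\alpha$-incident flips occur (these are points of a ball $B_\Sigma(T,k)$, which is where the factor $\Lambda_k$ enters); between two consecutive such flips the triangle incident to $\alpha$ is fixed, so that portion is carried isomorphically, not merely Lipschitzly, onto a geodesic of $\mathcal{F}(\Sigma\contract\alpha)$ of length at most $k$, which is where the factor $\widetilde{\Delta}_k$ enters (Lemma~\ref{PP4.sec.1.5.lem.4}); finally Lemma~\ref{PP4.sec.1.5.lem.3} shows that $\Lambda_k(\Sigma)$ and $\Lambda_k(\Sigma^\star)$ differ only by a factor independent of $k$, so the induction closes. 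If you wish to salvage your global projection, you must first prove the analogue of Theorem~\ref{PP4.sec.1.5.lem.2} for your notion of internal flip, and then replace the binomial interleaving count by anchoring the geodesic at the boundedly many triangulations where internal flips occur, as the paper does.
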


The bounds on $\Delta_k(\Sigma)$ stated by Theorems \ref{PP4.sec.0.thm.1}, \ref{PP4.sec.0.thm.2}, and \ref{PP4.sec.0.thm.4} will be given explicitly (see Theorems \ref{PP4.sec.2.thm.1}, \ref{PP4.sec.1.5.thm.1}, and \ref{PP4.sec.3.thm.1}). 

Finally, we point out that there are only two remaining cases, for which we do not know whether $\Delta_k(\Sigma)$ grows exponentially or polynomially as a function of $k$. These two cases are the $4$-punctured sphere and the genus one surface with a single boundary component and no puncture.

\medskip

\noindent{\bf Acknowledgments.}
Hugo Parlier is supported by the Luxembourg National Research Fund OPEN grant O19/13865598. Part of this work was done while the first author was visiting the second at the University Paris 13, thanks to a funding of the MathSTIC (CNRS FR3734) research consortium.

\section{Surface triangulations, flip-graphs, and contractions}\label{PP4.sec.1}

Throughout the article, we consider a finite-type orientable surface $\Sigma$, possibly with boundary curves and with a non-empty set $P$ of marked points. In order to ensure that $P$ can be the vertex set of a triangulation of $\Sigma$, each boundary curve must contain at least one marked point. Marked points that lie in the interior of the surface we call \emph{punctures}. 

An arc is a subset of $\Sigma$ that is obtained as the continuous image of a closed interval, which is an embedding on its interior, and with the endpoints of the interval sent to $P$. We are interested in arcs up to the homotopies that preserve $P$ and we only consider arcs which are not homotopic to a point (which can only occur if the two endpoints are the same). Two arcs are said to be disjoint if they are disjoint in their interiors and a triangulation of $\Sigma$ is a set of pairwise disjoint arcs that is maximal with respect to inclusion. We call the number of arcs in a triangulation (that does not depend on the choice of the triangulation) the arc complexity of $\Sigma$ and denote it by $\kappa(\Sigma)$. 

We also consider the unoriented graph $\mathcal{F}(\Sigma)$ whose vertices are the triangulations of $\Sigma$ and whose edges link two triangulations that differ by a single arc. In other words, two triangulations $T$ and $T'$ of $\Sigma$ are the endpoints of an edge of $\mathcal{F}(\Sigma)$ when $T'$ can be obtained from $T$ by replacing an arc with a different one. This operation is called a \emph{flip} and the graph $\mathcal{F}(\Sigma)$ the \emph{flip-graph} of $\Sigma$. We will think of $\mathcal{F}(\Sigma)$ as a metric space whose elements are the triangulations of $\Sigma$ where the distance $d(T,T')$ between two triangulations $T$ and $T'$ in $\mathcal{F}(\Sigma)$ is the minimal number of flips needed to transform $T$ into $T'$.
A minimal length path between two triangulations is called geodesic. For any two triangulations $T$ and $T'$ of $\Sigma$, we denote by $\#(T,T')$ the number of geodesic paths between $T$ and $T'$ in $\mathcal{F}(\Sigma)$. For any $k$ in $\mathbb{N}$, we consider the quantity
$$
\Delta_k(\Sigma)=\max \Bigl\{ \#(T,T'):(T,T')\in\mathcal{F}(\Sigma)^2,\,d(T,T')=k\Bigr\}\mbox{.}
$$

Note that $\Delta_k(\Sigma)$ is always well-defined and finite. This is because the number of possible flips in a triangulation of $\Sigma$ is bounded by $\kappa(\Sigma)$ and hence the number of possible sequences of $k$ flips that start at a given triangulation is always at most $\kappa(\Sigma)^k$. Since a flip in a geodesic path cannot be immediately followed by the inverse flip, we have the following slightly stronger bound.

\begin{prop}\label{PP4.sec.1.prop.1}
$\displaystyle\Delta_k(\Sigma)\leq\kappa(\Sigma)(\kappa(\Sigma)-1)^{k-1}$.
\end{prop}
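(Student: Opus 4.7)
The plan is to bound the number of geodesic paths starting at a fixed triangulation $T$ of $\Sigma$, and use this as an upper bound on $\#(T,T')$ for every triangulation $T'$ at distance $k$ from $T$. A geodesic of length $k$ out of $T$ is determined by its sequence of $k$ flips, so I will count such sequences by multiplying, step by step, the number of choices available.

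First, I would note that any triangulation $T$ of $\Sigma$ has exactly $\kappa(\Sigma)$ arcs, and flipping any one of them (assuming the flip is defined, i.e.\ that the arc is not fixed by all admissible moves) produces a neighbor in $\mathcal{F}(\Sigma)$. In particular, the number of flips that can be performed at $T$ is at most $\kappa(\Sigma)$, and this accounts for the first factor in the bound.

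The key observation is that in a geodesic path, a flip cannot be immediately followed by its inverse: if a flip replaces arc $a$ in $T_i$ by arc $a'$ to produce $T_{i+1}$, then the inverse flip replaces $a'$ by $a$ in $T_{i+1}$ and returns to $T_i$. Including such a pair of consecutive flips in a path $T_0, T_1, \ldots, T_k$ would allow us to shorten the path by two flips, contradicting minimality. Consequently, at each step after the first, the next flip in a geodesic must be chosen from at most $\kappa(\Sigma) - 1$ possibilities, namely the flips available at the current triangulation, minus the one that undoes the previous step.

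Multiplying the $\kappa(\Sigma)$ choices for the first flip by the at most $\kappa(\Sigma)-1$ choices for each of the subsequent $k-1$ flips gives an upper bound of $\kappa(\Sigma)(\kappa(\Sigma)-1)^{k-1}$ geodesic paths of length $k$ starting at $T$. Since every geodesic from $T$ to $T'$ is in particular a geodesic of length $k$ starting at $T$, this bounds $\#(T,T')$, and taking the maximum over all pairs at distance $k$ yields the stated inequality. No real obstacle arises here; the only point requiring care is the short verification that a flip in a geodesic cannot be immediately undone, which rests on the observation that reversing a single flip returns to the previous triangulation.
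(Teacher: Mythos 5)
Your argument is correct and is essentially the paper's own: bound the flips available at any triangulation by $\kappa(\Sigma)$, observe that along a geodesic a flip is never immediately undone (else the path could be shortened), and count flip sequences as $\kappa(\Sigma)(\kappa(\Sigma)-1)^{k-1}$. Nothing further is needed.
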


Throughout the article, $p$ will denote the number of punctures of $\Sigma$, $g$ its genus, and $b$ the number of its number of boundary components. It will be important to remember that $p$ only counts punctures (marked points in the interior of $\Sigma$) but ignores the number of marked points of $P$ that lie on the boundary curves of $\Sigma$. If $p$ and $b$ are simultaneously equal to $0$, then $\mathcal{F}(\Sigma)$ is empty and for this reason, we will assume that their sum is positive. If $b$ and $g$ are both equal to $0$, then $\Sigma$ is a sphere. If in addition, $p$ is equal to $1$ or $2$, there are not enough marked points to build a triangulation of $\Sigma$ (at best, one can decompose $\Sigma$ into bigons when $p$ is equal to $2$). Therefore, we assume that if $b$ and $g$ are both equal to $0$, then $p$ is at least $3$.

Let us also recall the three cases when $\mathcal{F}(\Sigma)$ is a finite graph. These three cases are marked with a triangle in Figure \ref{PP4.sec.1.fig.2}. The first case is the $3$-times punctured sphere (when $b$ and $g$ are equal to $0$ and $p$ to $3$). In that case $\mathcal{F}(\Sigma)$ is finite and the full graph is depicted in Figure \ref{PP4.sec.1.fig.1}. The second case is the disk (when $g$ and $p$ are equal to $0$ and $b$ is equal to $1$). In that case, $\mathcal{F}(\Sigma)$ is empty when there are less than $3$ marked points on the boundary of the disk and non-empty otherwise. This particular flip-graph is a graph made up of the vertices and edges of a polytope called the associahedron \cite{Lee1989,Stasheff1963a,Stasheff1963b,Tamari1951}, whose number of vertices is counted by the Catalan numbers. The third case where $\mathcal{F}(\Sigma)$ is finite is the once-punctured disk (that is when $b$ and $p$ are both equal to $1$ and $g$ to $0$) \cite{ParlierPournin2018b}. In this case $\mathcal{F}(\Sigma)$ is never empty and its size depends on the number of marked points on the boundary curve. 

We distinguish two kinds of arcs in a triangulation $T$ of $\Sigma$: if an arc in $T$ can be homotoped to the boundary of $\Sigma$ then we call that arc a \emph{boundary} arc of $T$. Any arc in $T$ that is not a boundary arc will be called an \emph{interior} arc. Observe that all the triangulations of $\Sigma$ have the same set of boundary arcs. For this reason, we will also refer to these arcs as the boundary arcs of $\Sigma$. When a boundary component $\gamma$ of $\Sigma$ contains a single point $p$ from $P$, then $p$ splits $\gamma$ into a single boundary arc $\alpha$ of $\Sigma$. In that case, $\alpha$ is twice incident to $p$ and we call this boundary arc a \emph{boundary loop}. In the remainder of the article, we denote the number of boundary arcs of $\Sigma$ by $n$. As each boundary component of $\Sigma$ contains at least one point from $P$, it gives rise to at least one boundary arc and therefore $n$ is not less than $b$. Note that when $n$ is equal to $b$, all the boundary arcs of $\Sigma$ are boundary loops.

\begin{figure}
\begin{centering}
\includegraphics[scale=1]{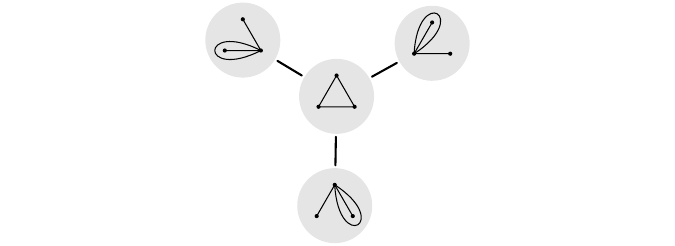}
\caption{The flip-graph of a $3$-punctured sphere.}\label{PP4.sec.1.fig.1}
\end{centering}
\end{figure}

Let us now describe an operation on $\Sigma$ and its triangulations that will be useful throughout the article. Consider a boundary arc $\alpha$ of $\Sigma$ and assume that $\alpha$ is not a boundary loop. In that case, we can build a new surface $\Sigma\contract\alpha$ by removing one of the vertices of $\alpha$ from the marked points of $\Sigma$. In other words, $\Sigma\contract\alpha$ is identical to $\Sigma$ except that its set of marked points is $P\mathord{\setminus}\{a\}$ instead of $P$, where $a$ is a vertex of $\alpha$. Note that the two surfaces obtained from this construction using, for $a$, one or the other vertex of $\alpha$ are homeomorphic, and we will identify $\Sigma\contract\alpha$ with any of these surfaces. 

Now consider a triangulation $T$ of $\Sigma$ and denote by $t$ the triangle of $T$ incident to $\alpha$. Further denote by $a$ and $a'$ the vertices of $\alpha$ as shown on the left of Figure~\ref{PP4.sec.1.fig.3}. We can obtain a triangulation $T\contract\alpha$ of $\Sigma\contract\alpha$ from $T$ as follows. First displace $a$ and $a'$ within $\alpha$ until they merge into a point that we will still denote by $a'$. During that motion, the arcs in $T$ incident to $a$ and $a'$ should be kept incident to the moving vertices as shown in Figure \ref{PP4.sec.1.fig.3}. This makes $\alpha$ disappear and transforms $t$ into the bigon that is striped in the center of Figure \ref{PP4.sec.1.fig.3}. In particular, the set of arcs resulting from that operation is not a triangulation. However, this set of arcs can be turned into the announced triangulation $T\contract\alpha$ of $\Sigma\contract\alpha$ by removing one of the arcs bounding the bigon. The only thing that remains of the triangle $t$ is the other arc bounding the bigon, that is labeled by $\beta$ on the right of Figure \ref{PP4.sec.1.fig.3}. This operation is referred to as the \emph{contraction} of $\alpha$ within either $\Sigma$ or $T$. It will be important to keep in mind that this contraction operation can only be performed when $\alpha$ is not a boundary loop.

The contraction operation was first used in \cite{Pournin2014} in order to compute the diameter of the flip-graph of a disk without punctures and later in \cite{ParlierPournin2017,ParlierPournin2018a,ParlierPournin2018b} to do the same for the modular flip-graphs of more general surfaces up to homeomorphism. It is also used more generally in \cite{PourninWang2021} to compute distances within the flip-graph of a disk without punctures. Let us recall a property of the contraction operation that is instrumental in all of these articles and will also turn up later here. This property is related to the following definition.

\begin{figure}
\begin{centering}
\includegraphics[scale=1]{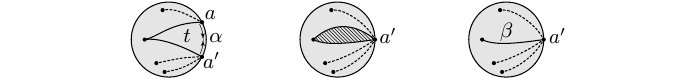}
\caption{The contraction of $\alpha$ in a triangulation of $\Sigma$.}\label{PP4.sec.1.fig.3}
\end{centering}
\end{figure}

\begin{defn}
Consider two triangulations $T$ and $T'$ of $\Sigma$ that are related by a flip. We say that this flip is incident to a boundary arc $\alpha$ of $\Sigma$ when the triangle incident to $\alpha$ is not the same in $T$ and in $T'$.
\end{defn}

In other words, a flip is incident to $\alpha$ when it exchanges the diagonals of a quadrilateral which has $\alpha$ as one of its sides. Now recall that all the triangulations of $\Sigma$ contain $\alpha$ and assume that $\alpha$ is not a boundary loop. In that case, for any two triangulations $T$ and $T'$ that are related by a flip, $T\contract\alpha$ and $T'\contract\alpha$ are two triangulations of $\Sigma\contract\alpha$ that either coincide or are still related by a flip. More precisely $T\contract\alpha$ and $T'\contract\alpha$ coincide precisely when the flip that relates $T$ and $T'$ is incident to $\alpha$. We therefore obtain the following.

\begin{lem}\label{PP4.sec.1.lem.1}
Consider a boundary arc $\alpha$ of $\Sigma$ and assume that $\alpha$ is not a boundary loop. If $\nu$ flips are incident to $\alpha$ along some geodesic path in $\mathcal{F}(\Sigma)$ between two triangulations $T$ and $T'$, then
$$
d(T,T')\geq{d(T\contract\alpha,T'\contract\alpha)}+\nu\mbox{.}
$$
\end{lem}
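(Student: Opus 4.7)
The plan is to take an arbitrary geodesic path in $\mathcal{F}(\Sigma)$ from $T$ to $T'$ along which exactly $\nu$ flips are incident to $\alpha$, push it through the contraction operation, and read off a walk in $\mathcal{F}(\Sigma\contract\alpha)$ between $T\contract\alpha$ and $T'\contract\alpha$ whose length is smaller by precisely $\nu$. The crucial input is the observation stated just before the lemma: when $\alpha$ is not a boundary loop, contracting a flip-related pair $(S,S')$ of triangulations of $\Sigma$ yields a pair $(S\contract\alpha, S'\contract\alpha)$ of triangulations of $\Sigma\contract\alpha$ that either coincide (exactly when the flip is incident to $\alpha$) or are again related by a single flip.

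Concretely, I would write $T=T_0,T_1,\dots,T_m=T'$ for a geodesic path of length $m=d(T,T')$, so that by hypothesis $\nu$ of the $m$ flips between consecutive $T_i$ and $T_{i+1}$ are incident to $\alpha$. Applying the contraction operation termwise produces a sequence
$$
T\contract\alpha=T_0\contract\alpha,\,T_1\contract\alpha,\,\dots,\,T_m\contract\alpha=T'\contract\alpha
$$
of triangulations of $\Sigma\contract\alpha$. By the observation above, each consecutive pair in this sequence is either equal (and this happens exactly $\nu$ times, corresponding to the flips incident to $\alpha$) or is joined by an edge of $\mathcal{F}(\Sigma\contract\alpha)$. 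Deleting duplicates then leaves a walk in $\mathcal{F}(\Sigma\contract\alpha)$ from $T\contract\alpha$ to $T'\contract\alpha$ of length exactly $m-\nu$. Since the distance is the minimum walk length, this yields $d(T\contract\alpha,T'\contract\alpha)\le m-\nu=d(T,T')-\nu$, which is the desired inequality.

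There is essentially no hard step here; the entire content is the dichotomy ``coincide or flip-related'' for the contracted pair, which has already been established in the discussion preceding the lemma. The only small point I would double-check while writing is that this dichotomy really does use the hypothesis that $\alpha$ is not a boundary loop (otherwise the construction of $\Sigma\contract\alpha$ is not even defined, since contraction identifies the two endpoints of $\alpha$), so it is worth flagging this assumption at the outset of the proof so that the application of the preceding discussion is unambiguous.
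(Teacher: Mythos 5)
Your argument is correct and is exactly the one the paper intends: the lemma is stated as an immediate consequence of the preceding observation that contracting a flip-related pair yields triangulations of $\Sigma\contract\alpha$ that coincide precisely when the flip is incident to $\alpha$ and are otherwise flip-related, so contracting a geodesic termwise gives a walk of length $d(T,T')-\nu$. Nothing is missing, and your remark about where the non-loop hypothesis enters is apt.
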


\section{The exponential regime}\label{PP4.sec.2}

Denote by $\Gamma$ the surface with two boundary components, genus zero, no puncture, and a single marked point on each boundary component. In this section, we prove Theorem \ref{PP4.sec.0.thm.1}: our goal is to show that, except for a few cases, $\Delta_k(\Sigma)$ is at least an exponential function of $k$ when $k$ goes to infinity. In order to do that, we are going to exhibit strongly convex subgraphs of $\mathcal{F}(\Sigma)$ that are isomorphic to the cartesian product $\mathcal{F}(\Gamma)\mathord{\times}\mathcal{F}(\Gamma)$. Indeed, we shall see that the largest possible number of geodesic paths between two vertices of that cartesian product is an exponential function of their distance. We recall that a subgraph $G$ of $\mathcal{F}(\Sigma)$ is strongly convex when the geodesic paths in $\mathcal{F}(\Sigma)$ between two vertices of $G$ are all entirely contained in $G$. Strong convexity in flip-graphs has been studied in different disguises for instance in \cite{CeballosPilaud2016,DisarloParlier2019,PourninWang2021,SleatorTarjanThurston1988}. Further recall that $\Sigma^\star$ denotes the surface obtained from $\Sigma$ by removing, on each boundary component, all the marked points except one. 

\begin{lem}\label{PP4.sec.1.5.lem.1}
$\mathcal{F}(\Sigma)$ has a strongly convex subgraph isomorphic to $\mathcal{F}(\Sigma^\star)$.
\end{lem}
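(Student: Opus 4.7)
The plan is to construct an explicit isometric embedding $\iota : \mathcal{F}(\Sigma^\star) \hookrightarrow \mathcal{F}(\Sigma)$ and then use \lemref{PP4.sec.1.lem.1} to prove that its image is strongly convex. For each boundary component $\gamma$ of $\Sigma$, let $p_1$ be the marked point retained in $\Sigma^\star$ and let $p_2,\ldots,p_m$ be the remaining marked points on $\gamma$ in cyclic order, giving boundary arcs $\alpha_1,\ldots,\alpha_m$ of $\Sigma$. In a thin annular neighborhood of $\gamma$, I would fix once and for all a loop $\ell_\gamma$ based at $p_1$ bounding, together with $\gamma$, a disk containing $p_2,\ldots,p_m$, and triangulate this disk by the fan of arcs $\delta_j$ from $p_1$ to $p_j$ for $j=2,\ldots,m-1$. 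Given $T^\star \in \mathcal{F}(\Sigma^\star)$, I would define $\iota(T^\star)$ as the union of (i) the arcs of $T^\star$ viewed in $\Sigma$, each boundary loop of $T^\star$ being realized as the corresponding $\ell_\gamma$, and (ii) all canonical fan arcs. An arc count via Euler characteristic confirms that $\iota(T^\star)$ is a triangulation of $\Sigma$.

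Next I would check that $\iota$ is an injective graph homomorphism. Injectivity is immediate, since $T^\star$ is recovered from $\iota(T^\star)$ by erasing the fan arcs and reinterpreting each $\ell_\gamma$ as the boundary loop of $\Sigma^\star$. For flip-preservation, any arc $\beta$ of $T^\star$ distinct from a boundary loop lies outside every fan region, and its two adjacent triangles in $\iota(T^\star)$ coincide with the corresponding ones in $T^\star$. Flipping $\beta$ in $\mathcal{F}(\Sigma)$ therefore yields $\iota$ applied to the triangulation of $\Sigma^\star$ obtained by flipping $\beta$ in $T^\star$. This already gives $d_\Sigma(\iota(T^\star),\iota(T'^\star)) \leq d_{\Sigma^\star}(T^\star,T'^\star)$.

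For the matching lower bound and for strong convexity I would iterate \lemref{PP4.sec.1.lem.1}. The surface $\Sigma^\star$ is obtained from $\Sigma$ by a sequence of contractions of non-loop boundary arcs, and these contractions undo the canonical fans, sending $\iota(T^\star)$ back to $T^\star$. The chain of inequalities supplied by the lemma then yields $d_\Sigma(\iota(T^\star),\iota(T'^\star)) \geq d_{\Sigma^\star}(T^\star,T'^\star)$, so $\iota$ is isometric. To finish, fix a geodesic in $\mathcal{F}(\Sigma)$ between $\iota(T^\star)$ and $\iota(T'^\star)$ and let $\alpha$ be any non-loop boundary arc of $\Sigma$. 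Running the same contraction chain but starting with $\alpha$, and using the isometry just established, forces the number $\nu_\alpha$ of flips along the geodesic incident to $\alpha$ to vanish. Consequently, for every such $\alpha$ the triangle of the current triangulation incident to $\alpha$ is constant along the geodesic; since these triangles jointly contain $\ell_\gamma$ and every $\delta_j$ on each boundary component, every vertex of the geodesic contains the full canonical fan and hence lies in $\iota(\mathcal{F}(\Sigma^\star))$.

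The main difficulty will be the bookkeeping in the last step: one has to verify that $\ell_\gamma$ together with the $\delta_j$'s are exactly the interior sides of the fan triangles incident to the boundary arcs of $\gamma$, so that locking all those triangles along the geodesic locks the entire fan. The two extreme fan triangles, the one containing $\ell_\gamma$ and $\alpha_1$ and the one containing $\alpha_{m-1}$ and $\alpha_m$, must be treated separately to avoid missing either $\ell_\gamma$ or $\delta_{m-1}$.
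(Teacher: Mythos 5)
Your embedding is essentially the one the paper uses: there, for each boundary component one fixes an arc twice incident to the retained marked point $a_i$ that cuts off a disk containing the other marked points of that component, fixes once and for all a triangulation of each such disk (you choose the fan), and takes the subgraph of $\mathcal{F}(\Sigma)$ spanned by the triangulations containing all these fixed arcs. Where you genuinely diverge is the convexity step: the paper settles it in one line by citing \cite{DisarloParlier2019}, where it is shown that the subgraph spanned by the triangulations containing a fixed multi-arc is strongly convex in $\mathcal{F}(\Sigma)$, whereas you re-derive strong convexity for this particular multi-arc from \lemref{PP4.sec.1.lem.1}: iterated contraction of the non-loop boundary arcs collapses the fans and carries $\iota(T^\star)$ back to $T^\star$, which first makes $\iota$ isometric, and then, running the chain so that it starts at an arbitrary non-loop boundary arc $\alpha$, forces $\nu_\alpha=0$ along every geodesic between image vertices, so the fan triangles --- and with them $\ell_\gamma$ and all the $\delta_j$ --- are frozen along it. This is correct and buys a self-contained argument in the spirit of the contraction techniques the paper deploys in \secref{PP4.sec.1.5}; the price is exactly the bookkeeping you flag, plus two points you should make explicit: contracting \emph{any} boundary arc of $\gamma$, in any order, must be checked to turn a fan into a fan based at the surviving point (including the arcs $\alpha_1$ and $\alpha_m$ incident to $p_1$, where $\ell_\gamma$ or $\delta_2$, respectively $\delta_{m-1}$ or $\alpha_{m-1}$, become homotopic and one is discarded), and boundary components already carrying a single marked point must be left untouched, since for them no loop $\ell_\gamma$ exists or is needed. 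Note also that your argument is tailored to these boundary-parallel multi-arcs, so it does not replace the citation of \cite{DisarloParlier2019} where the paper needs strong convexity for arbitrary fixed arcs in \secref{PP4.sec.2}.
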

\begin{proof}
Denote by $\alpha_1$ to $\alpha_b$ the boundary components of $\Sigma$. As there is at least a marked point in any of them, we can pick one of these marked points, say $a_i$ in $\alpha_i$. Cutting $\alpha_i$ at $a_i$ results in a boundary loop $\alpha_i^\star$ twice incident to $a_i$ that contains in its interior all the marked points in $\alpha_i$ but $a_i$. Now consider a sequence $\beta_1$ to $\beta_b$ arcs in $\Sigma$ such that $\beta_i$ is twice incident to $a_i$ and the portion of $\Sigma$ bounded by $\alpha_i\cup\beta_i$ is a topological disk $\Sigma_i$. By construction, $\Sigma^\star$ is homeomorphic to the surface obtained by cutting the disks $\Sigma_1$ to $\Sigma_b$ out from $\Sigma$. We will therefore identify $\Sigma^\star$ with this surface.

Now consider a sequence $T_1$ to $T_b$ of triangulations of $\Sigma_1$ to $\Sigma_b$, respectively and observe that, for any triangulation $T^\star$ of $\Sigma^\star$,
$$
\left(\bigcup_{i=1}^bT_i\right)\cup{T^\star}
$$
is a triangulation of $\Sigma$. It immediately follows that $\mathcal{F}(\Sigma)$ contains a copy of $\mathcal{F}(\Sigma^\star)$ as an induced subgraph. According to \cite{DisarloParlier2019}, that subgraph is strongly convex in $\mathcal{F}(\Sigma)$, which completes the proof.
\end{proof}

Using Lemma \ref{PP4.sec.1.5.lem.1}, we can state sufficient conditions on $\Sigma$ under whose $\mathcal{F}(\Sigma)$ has a strongly convex subgraph isomorphic to $\mathcal{F}(\Gamma)$.

\begin{prop}\label{PP4.sec.2.prop.1}
If $\Sigma$ has at least one boundary curve and at least two punctures, then $\mathcal{F}(\Sigma)$ admits a copy of $\mathcal{F}(\Gamma)$ as a strongly convex subgraph.
\end{prop}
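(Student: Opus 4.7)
The plan is to exhibit, inside $\Sigma$, a subsurface $\Gamma'$ homeomorphic to $\Gamma$ and to take for the copy of $\mathcal{F}(\Gamma)$ the set of all triangulations of $\Sigma$ that coincide with a fixed triangulation outside $\Gamma'$, in the same spirit as the proof of Lemma~\ref{PP4.sec.1.5.lem.1}. To streamline the construction I would first reduce to working inside $\Sigma^\star$. By Lemma~\ref{PP4.sec.1.5.lem.1}, $\mathcal{F}(\Sigma^\star)$ embeds as a strongly convex subgraph of $\mathcal{F}(\Sigma)$, and a routine check shows that a strongly convex subgraph of a strongly convex subgraph is again strongly convex. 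Hence it is enough to produce a strongly convex copy of $\mathcal{F}(\Gamma)$ inside $\mathcal{F}(\Sigma^\star)$, where now every boundary component is a boundary loop and the two punctures $p_1,p_2$ of $\Sigma$ persist.

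Next, let $\alpha$ be a boundary loop of $\Sigma^\star$ based at the marked point $a$. Since $\Sigma^\star\setminus\{p_2\}$ is connected, I can choose an arc $\beta$ from $a$ to $p_1$ whose interior avoids $p_2$. I then take $\Gamma'$ to be a regular neighborhood of $\alpha\cup\beta$ in $\Sigma^\star$, chosen small enough to miss $p_2$ and every other marked point. The graph $\alpha\cup\beta$ has Euler characteristic $0$ with a single independent cycle $\alpha$, so its regular neighborhood in the orientable surface $\Sigma^\star$ is an annulus. One boundary component of this annulus is $\alpha$ itself, since $\alpha\subset\partial\Sigma^\star$, while the other is a simple closed curve passing through $p_1$ and containing no other marked point. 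Therefore $\Gamma'$ is a cylinder carrying exactly one marked point on each of its two boundaries, that is, $\Gamma'\cong\Gamma$.

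To finish, fix a triangulation $T_0$ of the subsurface obtained by cutting $\Gamma'$ out of $\Sigma^\star$. For every triangulation $T'$ of $\Gamma'$ the union $T_0\cup T'$ is a triangulation of $\Sigma^\star$, and the assignment $T'\mapsto T_0\cup T'$ is an isomorphism from $\mathcal{F}(\Gamma)\cong\mathcal{F}(\Gamma')$ onto an induced subgraph of $\mathcal{F}(\Sigma^\star)$. Strong convexity of this subgraph in $\mathcal{F}(\Sigma^\star)$ then follows from the subsurface result of \cite{DisarloParlier2019} already invoked in Lemma~\ref{PP4.sec.1.5.lem.1}. The most delicate point is verifying the homeomorphism type of $\Gamma'$; this is exactly where the hypothesis of having two punctures is indispensable, since with a single puncture the regular neighborhood of $\alpha\cup\beta$ would fill all of $\Sigma^\star$, which in that case is a once-punctured disk rather than a cylinder.
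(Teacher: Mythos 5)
Your construction is in substance the paper's own: cut off a cylinder subsurface around one boundary component and one puncture, realize the flip-graph of that cylinder as the induced subgraph of triangulations containing a fixed triangulation of the complement, and invoke \cite{DisarloParlier2019} for strong convexity. The only structural difference is the order in which Lemma~\ref{PP4.sec.1.5.lem.1} is used: you pass to $\Sigma^\star$ first, while the paper keeps $\Sigma$ and applies the lemma at the end to the cylinder $\Gamma'$, whose boundary curve may carry several marked points. Strong convexity is indeed transitive, so this reordering is harmless.

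One detail does need repair, and it is exactly the point you single out as delicate. If $N$ is a genuine regular neighborhood of $\alpha\cup\beta$, then $p_1$ lies in the \emph{interior} of $N$ and the frontier of $N$ is a simple closed curve missing every marked point; so $N$ is a cylinder with an interior puncture and an unmarked boundary circle, not a copy of $\Gamma$, and its frontier is not an arc, so ``all triangulations agreeing with a fixed triangulation outside $N$'' is not a subsurface-induced subgraph of the kind covered by \cite{DisarloParlier2019}. The fix is what the paper does: take the second boundary of the cylinder to be the loop arc $\delta$ twice incident to $p_1$ obtained by pushing the frontier circle onto $p_1$, i.e.\ the arc that together with the boundary loop $\alpha$ bounds an unpunctured cylinder containing $a$; cutting along $\delta$ then yields a subsurface homeomorphic to $\Gamma$, and the induced subgraph construction applies verbatim. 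One must also check that $\delta$ is essential and that its complement admits a triangulation; both hold because the second puncture $p_2$ lies outside the cylinder. This, rather than the ``filling'' phenomenon you mention (which only occurs when $\Sigma^\star$ is a once-punctured disk), is where the hypothesis of at least two punctures is really used.
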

\begin{proof}
Assume that $\Sigma$ has at least one boundary curve and at least two punctures. Denote by $\beta$ one of the boundary curves of $\Sigma$ and by $x$ one of its punctures. Pick an arc $\alpha$ in $\Sigma$ twice incident to $x$ that separates the surface into a cylinder $\Gamma'$ without punctures bounded by $\alpha$ and $\beta$ and a surface $\Sigma'$ that contains all the remaining topology of $\Sigma$ as shown on the left of Figure \ref{PP4.sec.1.fig.4}. Note that $\alpha$ is not contractible because $\Sigma'$ has at least one puncture. Therefore, $\Gamma'$ is indeed a cylinder. Pick a triangulation $T'$ of $\Sigma'$ and consider the subgraph $\mathcal{G}$ of $\mathcal{F}(\Sigma)$ induced by the triangulations that contain all the arcs in $T'$.

By construction, $\mathcal{G}$ is isomorphic to $\mathcal{F}(\Gamma')$ and according to \cite{DisarloParlier2019}, $\mathcal{G}$ is a strongly convex subgraph of $\mathcal{F}(\Sigma)$. Therefore, $\mathcal{F}(\Sigma)$ has a strongly convex subgraph isomorphic to $\mathcal{F}(\Gamma')$. This does not immediately provide the desired result because $\mathcal{F}(\Gamma')$ is not necessarily isomorphic to $\mathcal{F}(\Gamma)$. Indeed, $\beta$ can contain several marked points and, in that case, $\Gamma'$ is not homeomorphic to $\Gamma$. However, $[\Gamma']^\star$ is homeomorphic to $\Gamma$ and, by Lemma \ref{PP4.sec.1.5.lem.1}, $\mathcal{F}(\Gamma')$ has a strongly convex subgraph isomorphic to $\mathcal{F}(\Gamma)$, which proves the proposition.
\end{proof}

\begin{figure}[b]
\begin{centering}
\includegraphics[scale=1]{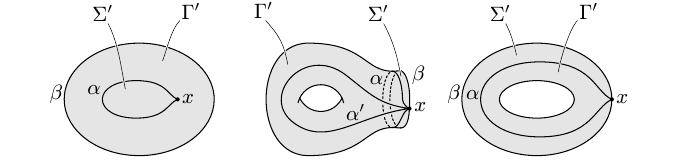}
\caption{The constructions in Propositions \ref{PP4.sec.2.prop.1}, \ref{PP4.sec.2.prop.2}, and \ref{PP4.sec.2.prop.3}.}\label{PP4.sec.1.fig.4}
\end{centering}
\end{figure}

\begin{prop}\label{PP4.sec.2.prop.2}
If $\Sigma$ has at least one boundary curve and genus at least one, then $\mathcal{F}(\Sigma)$ admits a copy of $\mathcal{F}(\Gamma)$ as a strongly convex subgraph.
\end{prop}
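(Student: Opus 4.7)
The strategy mirrors that of Proposition~\ref{PP4.sec.2.prop.1}, with the genus of $\Sigma$ playing the role of the extra puncture. The plan is to exhibit a cylinder $\Gamma'$, possibly with several marked points on each of its two boundary circles, such that $\mathcal{F}(\Gamma')$ embeds as a strongly convex subgraph of $\mathcal{F}(\Sigma)$. An application of Lemma~\ref{PP4.sec.1.5.lem.1} to $\Gamma'$ will then yield a strongly convex copy of $\mathcal{F}({\Gamma'}^\star)=\mathcal{F}(\Gamma)$ inside $\mathcal{F}(\Gamma')$, and hence inside $\mathcal{F}(\Sigma)$.

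Concretely, after invoking Lemma~\ref{PP4.sec.1.5.lem.1} once to reduce to the case $\Sigma=\Sigma^\star$ (one marked point per boundary curve and no punctures), I would pick a boundary curve $\beta$ of $\Sigma$ and the unique marked point $x$ on $\beta$. Since $g\geq 1$, I would choose an arc $\alpha$ twice incident to $x$ whose homotopy class generates a handle of $\Sigma$; such an $\alpha$ is non-separating. Cutting $\Sigma$ along $\alpha$ produces an orientable surface $\Sigma_\alpha$ with $\chi(\Sigma_\alpha)=\chi(\Sigma)+1$, and in the base case $g=b=1$ a direct inspection shows that $\Sigma_\alpha$ is a cylinder $\Gamma'$ with marked points on each of its two boundary circles (the point $x$ splitting among them). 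For general $g\geq 1$ and $b\geq 1$ I would iterate the construction, choosing additional arcs so that each successive cut surface stays connected and one eventually reaches a cylinder $\Gamma'$ with a marked point on each boundary circle. The triangulations of $\Sigma$ containing the resulting set of arcs are in flip-preserving bijection with the triangulations of $\Gamma'$, and the convexity result from~\cite{DisarloParlier2019} used in the proof of Proposition~\ref{PP4.sec.2.prop.1} shows that the corresponding subgraph of $\mathcal{F}(\Sigma)$ is strongly convex.

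The main obstacle is the topological step. In Proposition~\ref{PP4.sec.2.prop.1} the second puncture provided an interior base point automatically disjoint from $\beta$, so the cylinder $\Gamma'$ appeared as an honest subsurface of $\Sigma$ bounded by $\alpha\cup\beta$. Here, $x$ lies on $\beta$ itself, and the cylinder $\Gamma'$ must instead be produced by cutting $\Sigma$ along $\alpha$. One must verify carefully that the cut yields a connected cylinder whose two boundary circles each carry at least one marked point, and, in the general case of larger $g$ or $b$, arrange the additional arcs so as not to disconnect the intermediate surfaces while steadily reducing the topological complexity to that of a cylinder.
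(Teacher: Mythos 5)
Your overall mechanism is the right one and is close in spirit to the paper's: fix a collection of arcs so that the triangulations containing them form a subgraph isomorphic to the flip-graph of a cylinder with marked points on both boundary circles, invoke \cite{DisarloParlier2019} for strong convexity of that induced subgraph, and finish with Lemma~\ref{PP4.sec.1.5.lem.1} applied to the cylinder. But there is a genuine gap exactly where you locate it, and it is aggravated by a false premise. Passing to $\Sigma^\star$ via Lemma~\ref{PP4.sec.1.5.lem.1} only removes the extra marked points on the boundary curves; it does \emph{not} remove punctures, so after this reduction the surface may still carry any number of interior marked points. Consequently your ``base case'' $g=b=1$ yields a cylinder after one cut only when $p=0$, and your general iteration as described (cut handle arcs, keep the surface connected, reach a cylinder) never explains how the punctures disappear: cutting along non-separating handle arcs leaves them untouched, so the procedure as stated cannot terminate at an unpunctured cylinder. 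The iteration can be repaired (for instance by also cutting along arcs from $x$ to each puncture, which absorbs punctures into the boundary, and along arcs joining distinct boundary components to reduce their number), but none of this is carried out, and this topological reduction is precisely the content of the proposition.

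The paper sidesteps the iteration altogether: it chooses a single separating arc $\alpha$ twice incident to a boundary marked point $x$ that cuts off a genus-one, unpunctured subsurface with one boundary component, \emph{fixes a triangulation $T'$ of the complementary piece $\Sigma'$} (thereby freezing all the remaining topology --- extra genus, extra boundary components, and all punctures --- in one stroke; when $\alpha$ is homotopic to $\beta$ no $T'$ is needed), and then takes one non-separating arc $\alpha'$ inside the genus-one piece. The triangulations containing $\alpha'$ and the arcs of $T'$ form a strongly convex subgraph isomorphic to $\mathcal{F}(\Gamma')$, where $\Gamma'$ is a cylinder with one marked point on one boundary circle and two on the other, and $[\Gamma']^\star\cong\Gamma$ concludes via Lemma~\ref{PP4.sec.1.5.lem.1}. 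If you replace your open-ended cutting procedure by ``one separating arc cutting off an unpunctured handle, plus a fixed triangulation of the rest,'' your argument closes.
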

\begin{proof}
Assume that $\Sigma$ has at least one boundary curve $\beta$ and denote by $x$ one of the marked points in $\beta$. If in addition, $\Sigma$ has genus at least one, we can pick an arc $\alpha$ twice incident to $x$ in the interior of $\Sigma$ that separates $\Sigma$ into two surfaces one of whose is a genus one surface without punctures and a single boundary component and the other contains the remaining topology of $\Sigma$ as shown in the center of Figure~\ref{PP4.sec.1.fig.4}. Denote by $\Sigma'$ the latter surface and note that if $\alpha$ is homotopic to $\beta$, then $\Sigma'$ is just a bigon. Now pick an arc $\alpha'$ twice incident to $x$ in $\Sigma\mathord{\setminus}(\Sigma'\mathord{\setminus}\alpha))$ and denote by $\Gamma'$ the cylinder obtained by cutting $\Sigma\mathord{\setminus}(\Sigma'\mathord{\setminus}\alpha))$. Observe that $\Gamma'$ has no puncture, a single marked point on one of its boundary components, and two on the other, all three of whose are copies of $x$.

If $\alpha$ and $\beta$ are homotopic, then denote by $\mathcal{G}$ the subgraph of $\mathcal{F}(\Sigma)$ induced by the triangulations that contain $\alpha'$. Otherwise, consider a triangulation $T'$ of $\Sigma'$ and denote by $\mathcal{G}$ the subgraph of $\mathcal{F}(\Sigma)$ induced by the triangulations that contain $\alpha'$ and all the arcs in $T'$. In both cases, $\mathcal{G}$ is isomorphic to $\mathcal{F}(\Gamma')$. Moreover, according to \cite{DisarloParlier2019}, $\mathcal{G}$ is a strongly convex subgraph of $\mathcal{F}(\Sigma)$.

Finally, observe that $[\Gamma']^\star$ is homeomorphic to $\Gamma$. Hence, by Lemma \ref{PP4.sec.1.5.lem.1}, $\mathcal{G}$ admits a copy of $\mathcal{F}(\Gamma)$ as a strongly convex subgraph.
\end{proof}

\begin{prop}\label{PP4.sec.2.prop.3}
If $\Sigma$ has at least two boundary curves, then $\mathcal{F}(\Sigma)$ admits a copy of $\mathcal{F}(\Gamma)$ as a strongly convex subgraph.
\end{prop}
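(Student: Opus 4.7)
The plan is to imitate the template already used in Propositions~\ref{PP4.sec.2.prop.1} and~\ref{PP4.sec.2.prop.2}: I will cut off a cylinder $\Gamma'$ inside $\Sigma$ with a single arc, identify the triangulations of $\Sigma$ that contain that arc and a fixed triangulation of the complementary piece as a strongly convex subgraph of $\mathcal{F}(\Sigma)$ isomorphic to $\mathcal{F}(\Gamma')$, and finally apply Lemma~\ref{PP4.sec.1.5.lem.1} to the cylinder $\Gamma'$ to descend to a strongly convex copy of $\mathcal{F}(\Gamma)$.

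More concretely, I would pick two distinct boundary curves $\beta_1$ and $\beta_2$ of $\Sigma$ and a marked point $x$ on $\beta_1$. I would then choose an arc $\alpha$ in the interior of $\Sigma$ twice incident to $x$ that cobounds with $\beta_2$ a punctureless cylinder $\Gamma'$, the complementary surface $\Sigma'$ inheriting $\beta_1$ together with all remaining boundary curves, punctures, and genus of $\Sigma$ (as sketched on the right of Figure~\ref{PP4.sec.1.fig.4}). Because $\beta_2$ is a genuine boundary component, $\alpha$ is non-contractible and $\Gamma'$ really is a cylinder rather than a disk. Fixing a triangulation $T'$ of $\Sigma'$ and letting $\mathcal{G}$ be the subgraph of $\mathcal{F}(\Sigma)$ induced by the triangulations containing every arc of $T'$ gives an induced subgraph isomorphic to $\mathcal{F}(\Gamma')$, which is strongly convex in $\mathcal{F}(\Sigma)$ by the result of~\cite{DisarloParlier2019}. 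Since $[\Gamma']^\star$ is homeomorphic to $\Gamma$, Lemma~\ref{PP4.sec.1.5.lem.1} applied inside $\Gamma'$ then supplies the desired strongly convex subgraph of $\mathcal{G}$ isomorphic to $\mathcal{F}(\Gamma)$.

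The one point needing care is the genuinely degenerate configuration in which $\Sigma$ consists only of $\beta_1$, $\beta_2$, and no extra topology, so that $\Sigma'$ reduces to a bigon and there is no triangulation $T'$ to fix. I expect this to be the only real obstacle, and it can be disposed of exactly as in Proposition~\ref{PP4.sec.2.prop.2}: either let $\mathcal{G}$ be the subgraph induced by all triangulations of $\Sigma$ that contain $\alpha$, or simply observe that $\Sigma^\star$ is already homeomorphic to $\Gamma$ and apply Lemma~\ref{PP4.sec.1.5.lem.1} to $\Sigma$ directly. In either case the conclusion follows, and the three propositions together then cover every $\Sigma$ satisfying the hypotheses of Theorem~\ref{PP4.sec.0.thm.1} outside the listed exceptions.
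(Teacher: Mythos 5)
Your proposal is correct and follows essentially the same route as the paper: cut along an arc $\alpha$ twice incident to a marked point of one boundary curve so as to split off a punctureless cylinder $\Gamma'$ containing the other boundary curve, fix a triangulation of the complementary piece to get a strongly convex copy of $\mathcal{F}(\Gamma')$ via~\cite{DisarloParlier2019}, and then pass to $[\Gamma']^\star\cong\Gamma$ by Lemma~\ref{PP4.sec.1.5.lem.1}. Your handling of the degenerate case (when $\Sigma$ itself is a punctureless cylinder, so $\Sigma^\star\cong\Gamma$ and Lemma~\ref{PP4.sec.1.5.lem.1} applies directly) is exactly how the paper disposes of it as well.
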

\begin{proof}
Assume that $\Sigma$ has two boundary curves, denote by $\beta$ one of these curves, and by $x$ one of the marked pounts in $\beta$. Pick an arc $\alpha$ twice incident to $x$ in the intrior of $\Sigma$ that cuts the surface into a cylinder $\Gamma'$ without punctures and a surface $\Sigma'$ that contains the remaining topology of $\Sigma$, as shown on the right of Figure \ref{PP4.sec.1.fig.4}. Observe that $\alpha$ and $\beta$ might be homotopic. However, if this happens, $\Sigma$ is a cylinder without punctures and $\Sigma^\star$ is homeomorphic to $\Gamma$. Therefore, the result immediately follows from Lemma \ref{PP4.sec.1.5.lem.1} in that case.

If $\alpha$ and $\beta$ are not homotopic, consider a triangulation $T'$ of $\Sigma'$. The subgraph $\mathcal{G}$ induced in $\mathcal{F}(\Sigma)$ by the triangulations that contain all the arcs in $T'$ is isomorphic to $\mathcal{F}(\Gamma')$ and, by \cite{DisarloParlier2019}, it is a strongly convex subgraph of $\mathcal{F}(\Sigma)$. Since  $\Sigma^\star$ is homeomorphic to $\Gamma$, the result follows from Lemma \ref{PP4.sec.1.5.lem.1}.
\end{proof}

Using the above three propositions, we can exhibit strongly convex subgraphs of $\mathcal{F}(\Sigma)$ isomorphic to $\mathcal{F}(\Gamma)\mathord{\times}\mathcal{F}(\Gamma)$ except in a select number of cases.

\begin{lem}\label{PP4.sec.2.lem.2}
If $\Sigma$ has at least five punctures, then $\mathcal{F}(\Sigma)$ admits a copy of the cartesian product $\mathcal{F}(\Gamma)\mathord{\times}\mathcal{F}(\Gamma)$ as a strongly convex subgraph.
\end{lem}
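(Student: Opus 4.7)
The plan is to mimic the nested constructions of Propositions~\ref{PP4.sec.2.prop.1}--\ref{PP4.sec.2.prop.3}, but carried out twice within $\Sigma$, using the five available punctures to isolate two disjoint annular regions each homeomorphic to $\Gamma$. First, I would pick five distinct punctures $x_1, y_1, x_2, y_2, z$ of $\Sigma$, and then choose a non-contractible arc $\alpha_1$ twice incident to $x_1$ that separates $\Sigma$ into two subsurfaces: the first containing only $x_2$ and $y_2$ among its punctures (so it is topologically a disk with these two punctures inside and $x_1$ on its boundary), and the second containing $y_1$, $z$, and all the remaining topology of $\Sigma$.

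Inside the first subsurface, I would pick an arc $\alpha_2$ twice incident to $x_2$ that bounds a once-punctured disk around $y_2$; this leaves between $\alpha_1$ and $\alpha_2$ a puncture-free annulus with $x_1$ and $x_2$ as the sole marked points on its two boundary curves. Symmetrically, inside the second subsurface I would pick an arc $\alpha_1'$ twice incident to $y_1$ cobounding a puncture-free annulus with $\alpha_1$, separating this annulus from $z$ and the remaining topology. Both of the resulting annuli are cylinders with a single marked point on each of their two boundary components and no punctures, and so both are homeomorphic to $\Gamma$.

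After fixing a triangulation on each of the two leftover regions (the once-punctured disk around $y_2$, and the subsurface bounded by $\alpha_1'$ that contains $z$ together with any remaining topology; each is triangulable since each carries at least one boundary marked point and at least one puncture), let $\mathcal{G}$ denote the subgraph of $\mathcal{F}(\Sigma)$ induced by triangulations that contain $\alpha_1$, $\alpha_2$, $\alpha_1'$, and all arcs of these fixed triangulations. The only remaining freedom lies in independently triangulating the two annuli, so that $\mathcal{G}$ is isomorphic to $\mathcal{F}(\Gamma)\mathord{\times}\mathcal{F}(\Gamma)$. Just as in Propositions~\ref{PP4.sec.2.prop.1}--\ref{PP4.sec.2.prop.3}, the strong convexity of $\mathcal{G}$ in $\mathcal{F}(\Sigma)$ will follow from the result of \cite{DisarloParlier2019} already invoked in Lemma~\ref{PP4.sec.1.5.lem.1}.

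The most delicate step will be the construction of $\alpha_1'$: since $y_1$ does not lie on $\alpha_1$, one must route a loop based at $y_1$ that is parallel to $\alpha_1$ inside the second subsurface, in such a way that the two arcs actually cobound an annulus with no punctures and no hidden genus. I would achieve this by taking a parallel pushoff of $\alpha_1$ inside that subsurface and then dragging its basepoint to $y_1$ along a simple path disjoint from the remaining punctures. Once this routing is done, the rest is bookkeeping and the product decomposition that yields $\mathcal{F}(\Gamma)\mathord{\times}\mathcal{F}(\Gamma)$.
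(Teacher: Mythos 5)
Your proof is correct and follows essentially the same route as the paper: there, too, one cuts along a single arc through one of the five punctures to split off a $2$-punctured disk from a complementary piece containing the other two punctures, and then applies Proposition~\ref{PP4.sec.2.prop.1} to each side --- which is precisely the annulus-carving you perform explicitly with $\alpha_2$ and $\alpha_1'$. The only difference is that you inline the construction of that proposition rather than citing it, and both arguments rest on the same strong-convexity result from the reference used in Lemma~\ref{PP4.sec.1.5.lem.1}.
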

\begin{proof}
Assume that $\Sigma$ has at least five punctures. Consider an arc $\alpha$ in the interior of $\Sigma$ twice incident to one of these punctures and that separates $\Sigma$ into two sub-surfaces $\Sigma'$ and $\Sigma''$ one of whose is a $2$-punctured disk. The subgraph of $\mathcal{F}(\Sigma)$ induced by the triangulation that contain $\alpha$ is isomorphic to $\mathcal{F}(\Sigma')\mathord{\times}\mathcal{F}(\Sigma'')$ and according to \cite{DisarloParlier2019}, that subgraph is strongly convex in $\mathcal{F}(\Sigma)$. As both $\Sigma'$ and $\Sigma''$ have at least one boundary and at least two punctures, the lemma therefore follows from Proposition \ref{PP4.sec.2.prop.1}.
\end{proof}

\begin{lem}\label{PP4.sec.2.lem.3}
If $\Sigma$ has genus at least two and at least one puncture or at least one boundary component, then $\mathcal{F}(\Sigma)$ admits a copy of the cartesian product $\mathcal{F}(\Gamma)\mathord{\times}\mathcal{F}(\Gamma)$ as a strongly convex subgraph.
\end{lem}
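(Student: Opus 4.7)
The plan is to mirror the structure of Lemma~\ref{PP4.sec.2.lem.2}: produce a separating arc $\alpha$ that cuts $\Sigma$ into two subsurfaces $\Sigma'$ and $\Sigma''$, each of which satisfies the hypotheses of one of Propositions~\ref{PP4.sec.2.prop.1}, \ref{PP4.sec.2.prop.2}, \ref{PP4.sec.2.prop.3}, and hence contains a strongly convex copy of $\mathcal{F}(\Gamma)$ inside its flip-graph. Since the genus $g$ of $\Sigma$ is at least $2$, the natural target is to arrange that both $\Sigma'$ and $\Sigma''$ have genus at least $1$ and at least one boundary curve, so that Proposition~\ref{PP4.sec.2.prop.2} applies on each side.

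Concretely, I would pick a marked point $x$ of $\Sigma$: one on a boundary curve if $\Sigma$ has a boundary, and a puncture otherwise (the hypothesis ensures one of these exists). I would then construct an arc $\alpha$ twice incident to $x$ that separates $\Sigma$ into $\Sigma'$ and $\Sigma''$ with $g(\Sigma')\geq 1$ and $g(\Sigma'')\geq 1$; this is possible because a surface of genus $g\geq 2$ admits a separating simple closed curve splitting it into two pieces of positive genus, and such a curve can be pushed to an arc based at $x$ by attaching a thin strip from the curve to $x$. In either scenario, each piece $\Sigma^{(j)}\in\{\Sigma',\Sigma''\}$ acquires a boundary curve from the cut along $\alpha$, so it has at least one boundary component together with genus at least $1$, and Proposition~\ref{PP4.sec.2.prop.2} produces a strongly convex subgraph $\mathcal{G}^{(j)}$ of $\mathcal{F}(\Sigma^{(j)})$ isomorphic to $\mathcal{F}(\Gamma)$.

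To finish, the subgraph of $\mathcal{F}(\Sigma)$ induced by the triangulations containing $\alpha$ is isomorphic to the cartesian product $\mathcal{F}(\Sigma')\mathord{\times}\mathcal{F}(\Sigma'')$ and, by \cite{DisarloParlier2019}, is strongly convex in $\mathcal{F}(\Sigma)$. Since geodesics in a cartesian product of graphs are exactly interleavings of geodesics in the factors, $\mathcal{G}'\mathord{\times}\mathcal{G}''$ is strongly convex in $\mathcal{F}(\Sigma')\mathord{\times}\mathcal{F}(\Sigma'')$, and transitivity of strong convexity then yields a strongly convex copy of $\mathcal{F}(\Gamma)\mathord{\times}\mathcal{F}(\Gamma)$ in $\mathcal{F}(\Sigma)$.

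The main obstacle is verifying that a separating arc of the desired type can indeed be produced — in particular that the genus can be split non-trivially by an arc based at the chosen marked point $x$, and that both resulting pieces simultaneously meet the boundary-and-genus hypothesis of Proposition~\ref{PP4.sec.2.prop.2}. This is essentially a standard topological construction, but it has to be carried out separately in the boundary-point and puncture cases, and one must check that the location of $x$ (on a boundary curve versus at a puncture) does not obstruct realising the separating arc.
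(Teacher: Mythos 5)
Your proposal is correct and follows essentially the same route as the paper: pick a marked point $x$ (a puncture or a boundary marked point), cut along a separating arc twice incident to $x$ that splits the genus so that both pieces have positive genus and a boundary component, apply Proposition~\ref{PP4.sec.2.prop.2} to each piece, and use strong convexity of the induced product subgraph via \cite{DisarloParlier2019}. The topological step you flag as the main obstacle is exactly the construction the paper invokes without further elaboration.
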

\begin{proof}
 Assume that $\Sigma$ at least one puncture or one boundary component. In that case, one can pick a marked point $x$ in $\Sigma$ (which is either a puncture or a marked point in a boundary component). If in addition, $\Sigma$ has genus at least two, then we can choose an arc $\alpha$ in the interior of $\Sigma$ that is twice incident to $x$ and that separates $\Sigma$ into a genus one surface $\Sigma'$ and a positive genus surface $\Sigma''$. By construction, the subgraph induced in $\mathcal{F}(\Sigma)$ by the triangulations that admit $\alpha$ as an arc is isomorphic to $\mathcal{F}(\Sigma')\mathord{\times}\mathcal{F}(\Sigma'')$. Moreover, by \cite{DisarloParlier2019} that subgraph is strongly convex in $\mathcal{F}(\Sigma)$. As $\Sigma'$ and $\Sigma''$ each have at least one boundary component and positive genus, the result follows from  Proposition \ref{PP4.sec.2.prop.2}.
\end{proof}

\begin{lem}\label{PP4.sec.2.lem.4}
If $\Sigma$ has at least three boundary components or exactly two boundary components but at least one puncture, then $\mathcal{F}(\Sigma)$ admits a copy of $\mathcal{F}(\Gamma)\mathord{\times}\mathcal{F}(\Gamma)$ as a strongly convex subgraph.
\end{lem}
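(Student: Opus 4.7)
The plan is to mirror Lemmas \ref{PP4.sec.2.lem.2} and \ref{PP4.sec.2.lem.3}. The goal is to produce an interior arc $\alpha$ of $\Sigma$ twice incident to a marked point that separates $\Sigma$ into two sub-surfaces $\Sigma'$ and $\Sigma''$, each having at least two boundary components. Given such an $\alpha$, the subgraph of $\mathcal{F}(\Sigma)$ induced by the triangulations that contain $\alpha$ is isomorphic to $\mathcal{F}(\Sigma')\mathord{\times}\mathcal{F}(\Sigma'')$, and it is strongly convex in $\mathcal{F}(\Sigma)$ by \cite{DisarloParlier2019}. Because each of $\Sigma'$ and $\Sigma''$ has at least two boundary components, Proposition \ref{PP4.sec.2.prop.3} will then yield strongly convex copies of $\mathcal{F}(\Gamma)$ inside $\mathcal{F}(\Sigma')$ and $\mathcal{F}(\Sigma'')$, whose Cartesian product is strongly convex in $\mathcal{F}(\Sigma')\mathord{\times}\mathcal{F}(\Sigma'')$ and hence in $\mathcal{F}(\Sigma)$.

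The construction of $\alpha$ will handle both cases uniformly. Label the boundary components of $\Sigma$ as $\beta_1,\ldots,\beta_b$ and pick a marked point $x$ on $\beta_1$. Take $\alpha$ to be a simple loop based at $x$, embedded in the interior of $\Sigma$ away from $x$, chosen so that it bounds a thin annular neighborhood of $\beta_2$. The sub-surface $\Sigma'$ enclosed by $\alpha$ is then a cylinder whose two boundary components are $\alpha$ (carrying only the marked point $x$) and $\beta_2$, while the complementary sub-surface $\Sigma''$ has $\alpha$ and $\beta_1$ as boundary components together with $\beta_3,\ldots,\beta_b$, and inherits all the punctures and the entire genus of $\Sigma$. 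When $b\geq3$, this gives $\Sigma''$ at least $b$ boundary components; when $b=2$ and $p\geq1$, the boundary components of $\Sigma''$ are exactly $\alpha$ and $\beta_1$, so there are still two. Thus in both cases $\Sigma'$ and $\Sigma''$ satisfy the hypothesis of Proposition \ref{PP4.sec.2.prop.3}.

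The main point requiring care is verifying that a simple loop $\alpha$ with the stated separation property really exists as an arc in the sense of Section \ref{PP4.sec.1}. To produce it, I plan to take a small open annular neighborhood of $\beta_2$ in the interior of $\Sigma$---which exists because $\beta_2$ is disjoint from $\beta_1$ and hence from $x$---and connect its inner boundary circle to $x$ by a short interior arc, smoothing the concatenation into a simple loop based at $x$. By construction no puncture and no handle of $\Sigma$ lies inside this annulus, so the enclosed region is genuinely a cylinder. Once $\alpha$ is in place, the conclusion will follow by combining the product description of the induced subgraph, the strong convexity result of \cite{DisarloParlier2019}, and Proposition \ref{PP4.sec.2.prop.3} applied in turn to $\Sigma'$ and $\Sigma''$.
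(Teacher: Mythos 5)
Your overall strategy (cut along a separating loop $\alpha$, use the product description of the induced subgraph, strong convexity from \cite{DisarloParlier2019}, then Proposition \ref{PP4.sec.2.prop.3} on each factor) is exactly the paper's, but your choice of basepoint breaks down in the second case. If the loop $\alpha$ is based at a marked point $x$ lying \emph{on} the boundary curve $\beta_1$, then on the side of $\alpha$ containing $\beta_1$ the curve $\alpha$ does not become a boundary component of its own: since both ends of $\alpha$ meet the boundary at $x$, the arc $\alpha$ and the curve $\beta_1$ join up at $x$ into a \emph{single} boundary circle of $\Sigma''$. (This is visible in the paper's own language: in the proof of Proposition \ref{PP4.sec.2.prop.2}, when $\alpha$ is homotopic to $\beta$ the region between them is a bigon, i.e.\ a disk with one boundary circle made of $\alpha$ and $\beta$, not an annulus.) Consequently, when $b=2$ and $p\geq1$ your $\Sigma''$ has only \emph{one} boundary component, not the two you claim, and Proposition \ref{PP4.sec.2.prop.3} cannot be applied to it. The failure is real, not just a bookkeeping issue: for the once-punctured cylinder ($g=0$, $b=2$, $p=1$), your $\Sigma''$ is a once-punctured disk, whose flip-graph is finite and hence contains no strongly convex copy of the bi-infinite path $\mathcal{F}(\Gamma)$ at all; yet this surface must be covered by the lemma, since it is not among the exceptions of Theorem \ref{PP4.sec.0.thm.1}.

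The paper avoids this by choosing the basepoint case by case: when there are at least three boundary components, $x$ is taken on a boundary curve (then the merged circle $\alpha\cup\beta_1$ is harmless because each side still contains one of the other boundary curves of $\Sigma$, giving $\Sigma''$ its $b-1\geq2$ boundary components --- note also that your count ``at least $b$'' overcounts by one for the same reason); but when there are exactly two boundary components and at least one puncture, $x$ is taken to be a \emph{puncture}. Since a puncture is an interior marked point, cutting along the loop $\alpha$ then gives each of the two pieces a full boundary circle coming from $\alpha$, and each piece also contains one of the two boundary curves of $\Sigma$, so both pieces genuinely have two boundary components and Proposition \ref{PP4.sec.2.prop.3} applies on both sides. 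Replacing your uniform choice of $x\in\beta_1$ by this case distinction repairs the argument.
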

\begin{proof}
Assume that $\Sigma$ either has at least three boundary components or just two boundary components but at least one puncture. In the latter case, let $x$ denote a puncture and, in the former, a  marked point in one of the boundary components. Now pick an arc $\alpha$ in the interior of $\Sigma$ twice incident to $x$. We can require that $\alpha$ separates $\Sigma$ into two surfaces $\Sigma'$ and $\Sigma''$, both of whose have at least two boundary components. Note that $\alpha$ is a boundary component of these two surfaces and the other is one of the boundary components of $\Sigma$ that does not contain $x$. The subgraph induced in $\mathcal{F}(\Sigma)$ by the triangulations that contain $\alpha$ is isomorphic to $\mathcal{F}(\Sigma')\mathord{\times}\mathcal{F}(\Sigma'')$ and that subgraph is strongly convex in $\mathcal{F}(\Sigma)$ according to \cite{DisarloParlier2019}. By Proposition~\ref{PP4.sec.2.prop.3}, $\mathcal{F}(\Sigma')$ and $\mathcal{F}(\Sigma'')$ each admit a copy of $\mathcal{F}(\Gamma)$ as a strongly convex subgraph, which proves the lemma. 
\end{proof}

\begin{lem}\label{PP4.sec.2.lem.5}
If $\Sigma$ has positive genus and either at least two boundary components or exactly one boundary component but at least one puncture, then $\mathcal{F}(\Sigma)$ admits a copy of $\mathcal{F}(\Gamma)\mathord{\times}\mathcal{F}(\Gamma)$ as a strongly convex subgraph.
\end{lem}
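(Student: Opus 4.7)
The plan is to follow the pattern established by the previous three lemmas in this section: produce a separating arc $\alpha$ in $\Sigma$ that cuts the surface into two subsurfaces $\Sigma_1$ and $\Sigma_2$ so that the triangulations of $\Sigma$ containing $\alpha$ induce a subgraph isomorphic to $\mathcal{F}(\Sigma_1)\mathord{\times}\mathcal{F}(\Sigma_2)$, which is strongly convex in $\mathcal{F}(\Sigma)$ by \cite{DisarloParlier2019}. I would then apply one of Propositions \ref{PP4.sec.2.prop.1}, \ref{PP4.sec.2.prop.2}, and \ref{PP4.sec.2.prop.3} to each $\mathcal{F}(\Sigma_i)$ to extract a strongly convex copy of $\mathcal{F}(\Gamma)$. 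Since a product of strongly convex subgraphs is strongly convex in the product, this yields the desired copy of $\mathcal{F}(\Gamma)\mathord{\times}\mathcal{F}(\Gamma)$ inside $\mathcal{F}(\Sigma)$.

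The case $g\geq 2$ is already subsumed by Lemma \ref{PP4.sec.2.lem.3}, so the substance of the argument lies in the case $g=1$. My main move would be to pick a marked point $x$ in some boundary component $\beta_0$ of $\Sigma$ and an arc $\alpha$ twice incident to $x$ that carves off a genus-one subsurface $\Sigma_1$ with one boundary and no puncture; Proposition \ref{PP4.sec.2.prop.2} then handles $\mathcal{F}(\Sigma_1)$. The complement $\Sigma_2$ is a genus-zero surface whose boundaries consist of $\alpha$ joined with part of $\beta_0$ together with any further original boundaries of $\Sigma$, and which inherits all $p$ punctures. If $b\geq 2$, I would route $\alpha$ so that $\Sigma_2$ contains every other original boundary, making it a surface with at least two boundary components to which Proposition \ref{PP4.sec.2.prop.3} applies. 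If instead $b=1$ and $p\geq 2$, then $\Sigma_2$ is a disk with $p\geq 2$ punctures and Proposition \ref{PP4.sec.2.prop.1} applies.

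The remaining and most delicate case is $g=b=p=1$: splitting off the handle at the boundary marked point would leave a once-punctured disk, which satisfies none of Propositions \ref{PP4.sec.2.prop.1}--\ref{PP4.sec.2.prop.3}. My workaround is to base the separating arc at the puncture $y$ rather than at a boundary marked point. Specifically, I would pick an arc $\alpha$ twice incident to $y$ such that $\alpha$ and $\beta_0$ cobound a cylinder $\Gamma'$ without punctures, in the spirit of the construction in Proposition \ref{PP4.sec.2.prop.1}. Then $[\Gamma']^\star$ is homeomorphic to $\Gamma$, so by Lemma \ref{PP4.sec.1.5.lem.1} the graph $\mathcal{F}(\Gamma')$ contains $\mathcal{F}(\Gamma)$ as a strongly convex subgraph, while the complementary piece $\Sigma'$ is a genus-one surface with a single boundary component (namely $\alpha$) and no puncture, so Proposition \ref{PP4.sec.2.prop.2} delivers the other copy of $\mathcal{F}(\Gamma)$ inside $\mathcal{F}(\Sigma')$.

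The main obstacle is precisely this exceptional configuration $g=b=p=1$: with only one puncture and only one boundary, any arc based at a boundary marked point that peels off positive genus leaves behind a subsurface of insufficient complexity for any of the three available propositions to apply. Basing the separating arc at the puncture instead swaps the roles of the two sides of the cut so that the cylinder lands where it is needed, allowing the other side to carry the handle and be treated by Proposition \ref{PP4.sec.2.prop.2}.
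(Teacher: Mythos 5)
Your argument is correct, but it takes a genuinely different route from the paper. You follow the separating-arc pattern of Lemmas \ref{PP4.sec.2.lem.2}--\ref{PP4.sec.2.lem.4} and \ref{PP4.sec.2.lem.6}: split $\Sigma$ into two pieces, treat each factor with one of Propositions \ref{PP4.sec.2.prop.1}--\ref{PP4.sec.2.prop.3}, and use the (correct, and implicitly used throughout the paper) fact that a product of strongly convex subgraphs is strongly convex in the product; this forces a case analysis and, in particular, the ad hoc fix for $g=b=p=1$, where you base the arc at the puncture so that the cylinder lands on one side and the genus-one piece with boundary $\alpha$ on the other -- that fix works, since $[\Gamma']^\star$ is homeomorphic to $\Gamma$ and Proposition \ref{PP4.sec.2.prop.2} applies to the genus-one piece. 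The paper instead makes a single, uniform move: pick $x$ to be a boundary marked point when $b\geq2$ and a puncture when $b=1$, use the positive genus to find a \emph{non-separating} arc $\alpha$ twice incident to $x$, and cut along it; the resulting connected surface $\Sigma'$ has at least three boundary components, the triangulations containing $\alpha$ induce a strongly convex copy of $\mathcal{F}(\Sigma')$ by \cite{DisarloParlier2019}, and Lemma \ref{PP4.sec.2.lem.4} already supplies $\mathcal{F}(\Gamma)\mathord{\times}\mathcal{F}(\Gamma)$ inside $\mathcal{F}(\Sigma')$. The paper's route buys brevity and avoids the delicate low-complexity case entirely by reusing the product built in Lemma \ref{PP4.sec.2.lem.4}, whereas your route shows directly how the two $\mathcal{F}(\Gamma)$ factors arise from an explicit splitting of $\Sigma$, at the cost of more bookkeeping.
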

\begin{proof}
 Assume that $\Sigma$ at least two boundary components or that is has a single boundary component but at least one puncture. In the former case, denote by $x$ a marked point in one of the boundary components of $\Sigma$ and in the latter case denote by $x$ one of its punctures. Under the additional assumption that $\Sigma$ has positive genus, there exists a non-separating arc $\alpha$ twice incident to $x$ in $\Sigma$. Cutting $\Sigma$ along $\alpha$ results in a surface $\Sigma'$ with at least three boundaries. By construction, the subgraph of $\mathcal{F}(\Sigma)$ induced by the triangulations that contain $\alpha$ is isomorphic to $\mathcal{F}(\Sigma')$ and according to \cite{DisarloParlier2019}, that subgraph is strongly convex in $\mathcal{F}(\Sigma)$. By Lemma \ref{PP4.sec.2.lem.4}, $\mathcal{F}(\Sigma')$ admits a copy of $\mathcal{F}(\Gamma)\mathord{\times}\mathcal{F}(\Gamma)$ as a strongly convex subgraph, which proves the lemma. 
\end{proof}

\begin{lem}\label{PP4.sec.2.lem.6}
If $\Sigma$ is a disk with at least three punctures then $\mathcal{F}(\Sigma)$ admits a copy of $\mathcal{F}(\Gamma)\mathord{\times}\mathcal{F}(\Gamma)$ as a strongly convex subgraph.
\end{lem}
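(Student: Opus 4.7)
The plan is to follow the template of Lemmas~\ref{PP4.sec.2.lem.2}--\ref{PP4.sec.2.lem.5}: exhibit a single interior arc $\alpha$ in $\Sigma$ whose presence forces a product decomposition of the induced subgraph, and then apply the previous propositions to each factor. Since $\Sigma$ is a disk with boundary $\beta$ and $p\geq3$ punctures, I would select three of them, call them $x_1$, $x_2$, and $x_3$, and pick $\alpha$ to be a simple loop in the interior of $\Sigma$, twice incident to $x_1$, and enclosing precisely $x_2$ and $x_3$ on one side. Such an $\alpha$ exists because the disk is simply connected and we are free to draw a Jordan curve through $x_1$ separating any chosen pair of punctures from the remaining topology.

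Cutting $\Sigma$ along $\alpha$ produces two subsurfaces: a subdisk $\Sigma'$ bounded by $\alpha$ with $x_1$ as the unique marked point on its boundary and $x_2,x_3$ as interior punctures, and a second piece $\Sigma''$ bounded by $\alpha$ and $\beta$, carrying $x_1$ on its $\alpha$-boundary, the original marked points on $\beta$, and the remaining $p-3$ punctures in its interior. The subgraph of $\mathcal{F}(\Sigma)$ induced by the triangulations containing $\alpha$ is then canonically isomorphic to $\mathcal{F}(\Sigma')\mathord{\times}\mathcal{F}(\Sigma'')$ and, by \cite{DisarloParlier2019}, strongly convex in $\mathcal{F}(\Sigma)$; this is precisely the mechanism used throughout Section~\ref{PP4.sec.2}.

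To finish, I would apply Proposition~\ref{PP4.sec.2.prop.1} to $\Sigma'$---which has one boundary curve and exactly two punctures---to embed $\mathcal{F}(\Gamma)$ strongly convexly into $\mathcal{F}(\Sigma')$, and Proposition~\ref{PP4.sec.2.prop.3} to $\Sigma''$---which always has two boundary components, regardless of where the remaining $p-3$ punctures land---to do the same into $\mathcal{F}(\Sigma'')$. Composing these inclusions yields the desired strongly convex copy of $\mathcal{F}(\Gamma)\mathord{\times}\mathcal{F}(\Gamma)$ in $\mathcal{F}(\Sigma)$. The only delicate point is the tight case $p=3$, in which $\Sigma''$ has no punctures and so Proposition~\ref{PP4.sec.2.prop.1} is unavailable for that factor; this is exactly where Proposition~\ref{PP4.sec.2.prop.3} takes over, and it applies since $\alpha$ and $\beta$ together always give $\Sigma''$ its two boundary components.
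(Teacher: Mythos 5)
Your proposal is correct and matches the paper's own argument: the paper likewise cuts $\Sigma$ along an arc $\alpha$ twice incident to a puncture so that one side is a $2$-punctured disk and the other a (possibly punctured) cylinder bounded by $\alpha$ and $\beta$, invokes \cite{DisarloParlier2019} for strong convexity of the induced product subgraph, and then applies Propositions~\ref{PP4.sec.2.prop.1} and~\ref{PP4.sec.2.prop.3} to the two factors exactly as you do. Your extra remark about the case $p=3$ is a welcome clarification but introduces nothing beyond the paper's reasoning.
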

\begin{proof}
Assume that $\Sigma$ is a punctured disk with at least three punctures. Consider an arc $\alpha$ twice incident to one of these punctures in the interior of $\Sigma$. We can assume that $\alpha$ cuts $\Sigma$ into a (possibly punctured) cylinder $\Sigma'$ and a $2$-punctured disk $\Sigma''$. The subgraph of $\mathcal{F}(\Sigma)$ induced by the triangulations that contain $\alpha$ is then isomorphic to $\mathcal{F}(\Sigma')\mathord{\times}\mathcal{F}(\Sigma'')$ and is is strongly convex in $\mathcal{F}(\Sigma)$ by \cite{DisarloParlier2019}. The result therefore follows from Propositions \ref{PP4.sec.2.prop.1} and \ref{PP4.sec.2.prop.3}.
\end{proof}

\begin{lem}\label{PP4.sec.2.lem.7}
If $\Sigma$ does not have a boundary, but has at least two punctures and genus exactly one, then $\mathcal{F}(\Sigma)$ admits a copy of the cartesian product $\mathcal{F}(\Gamma)\mathord{\times}\mathcal{F}(\Gamma)$ as a strongly convex subgraph.
\end{lem}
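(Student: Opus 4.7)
My plan is to reuse the template of Lemma~\ref{PP4.sec.2.lem.5}: identify a single interior arc $\alpha$ of $\Sigma$ such that cutting along it produces a surface $\Sigma'$ already covered by one of the earlier lemmas, and then transfer the strongly convex subgraph of $\mathcal{F}(\Sigma')$ back to $\mathcal{F}(\Sigma)$ via the isomorphism provided by \cite{DisarloParlier2019}.

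First I would fix a puncture $x$ of $\Sigma$ and use the hypothesis that $\Sigma$ has genus one to produce a non-separating arc $\alpha$ in the interior of $\Sigma$ twice incident to $x$; concretely, $\alpha$ is the image of a non-separating simple closed curve through $x$, chosen generically to avoid the other punctures. I then expect that cutting $\Sigma$ along $\alpha$ yields a punctured cylinder $\Sigma'$ of genus zero with exactly two boundary components, each comprising a single boundary loop based at one copy of $x$, together with the remaining $p-1\geq 1$ punctures in its interior. This is the local picture obtained by cutting the square model $\mathbb{R}^2/\mathbb{Z}^2$ of the torus along a side through a corner marked point: both resulting boundary circles inherit one copy of the corner, and the other punctures are untouched.

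Next, as in the preceding lemmas, the subgraph $\mathcal{G}$ of $\mathcal{F}(\Sigma)$ induced by the triangulations that contain $\alpha$ is isomorphic to $\mathcal{F}(\Sigma')$ and, by \cite{DisarloParlier2019}, is strongly convex in $\mathcal{F}(\Sigma)$. Since $\Sigma'$ has exactly two boundary components and at least one puncture, Lemma~\ref{PP4.sec.2.lem.4} applies to $\Sigma'$ and furnishes a strongly convex subgraph of $\mathcal{F}(\Sigma')$ isomorphic to $\mathcal{F}(\Gamma)\mathord{\times}\mathcal{F}(\Gamma)$. Strong convexity composes under inclusions of subgraphs, so this subgraph is also strongly convex in $\mathcal{F}(\Sigma)$, completing the argument.

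The only step that requires any genuinely topological care is identifying the cut surface $\Sigma'$ with the punctured cylinder described above, and in particular checking that $x$ splits into one marked point on each of the two new boundary components rather than two points on a single one. The rest is a mechanical reduction to Lemma~\ref{PP4.sec.2.lem.4} via the now-standard subgraph-transfer principle used throughout this section.
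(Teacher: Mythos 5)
Your proposal is correct and follows essentially the same route as the paper: cut along a non-separating arc twice incident to a puncture, identify the resulting surface as a punctured cylinder, use the strongly convex induced subgraph of triangulations containing $\alpha$ (via \cite{DisarloParlier2019}), and conclude by Lemma~\ref{PP4.sec.2.lem.4}. The extra care you take in verifying that the puncture splits into one marked point on each new boundary component is a detail the paper leaves implicit, but the argument is the same.
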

\begin{proof}
 Assume that $\Sigma$ has genus one and at least two punctures but no boundary. Consider a non-separating arc $\alpha$ in $\Sigma$ that is twice incident to one of the punctures. Cutting $\Sigma$ along $\alpha$ results in a cylinder $\Sigma'$ with at least one puncture. As in the above proofs, the subgraph of $\mathcal{F}(\Sigma)$ induced by the triangulations that admit $\alpha$ as an arc is strongly convex and isomorphic to $\mathcal{F}(\Sigma')$. Hence, the result is immediately obtained from Lemma \ref{PP4.sec.2.lem.4}.
\end{proof}

We are now ready to prove Theorem \ref{PP4.sec.0.thm.1}. The proof relies on the above lemmas and on the fact that $\mathcal{F}(\Gamma)$ is a bi-infinite simple path. More precisely, as already observed in \cite{ParlierPournin2017}, $\mathcal{F}(\Gamma)$ is the infinite  connected graph whose vertices each are incident to exactly two edges, as shown in Figure \ref{PP4.sec.1.fig.5}. In other words, $\mathcal{F}(\Gamma)$ can be modeled as the graph whose vertices are the integers in $\mathbb{Z}$ and whose edges connect any two consecutive integers. In turn, the cartesian product $\mathcal{F}(\Gamma)\mathord{\times}\mathcal{F}(\Gamma)$ is isomorphic to the graph $\mathcal{G}$ whose vertices are the points from $\mathbb{Z}^2$ and whose edges connect two points that coincide in one coordinate and differ by $1$ in the other. For any two positive integers $i$ and $j$, the distance in the graph $\mathcal{G}$ between the origin $(0,0)$ of $\mathbb{Z}^2$ and the point $(i,j)$ is precisely $i+j$ and the geodesics starting the the former point and ending in the latter are made up of edges along whose one coordinate increases by $1$ while the other coordinate does not change. As a consequence, there are
$$
\frac{(i+j)!}{i!j!}
$$
geodesics in $\mathcal{G}$ between these two points of $\mathbb{Z}^2$. As a consequence, if $k$ is any integer greater than or equal to $2$, then taking
$$
\left\{
\begin{array}{l}
i=\!\left\lfloor\frac{k}{2}\right\rfloor\!\mbox{ and}\\[\smallskipamount]
j=\!\left\lceil\frac{k}{2}\right\rceil\\
\end{array}
\right.
$$
and using Stirling's approximation of the factorial, one obtain that there are two vertices of $\mathcal{F}(\Gamma)\mathord{\times}\mathcal{F}(\Gamma)$ whose distance is equal to $k$ such that the number of geodesics between these vertices in $\mathcal{F}(\Gamma)\mathord{\times}\mathcal{F}(\Gamma)$ is greater than
$$
\left(1-\frac{1}{k}\right)\!\sqrt{\frac{2}{k\pi}}2^ke^{-\frac{1}{4(k-1)}}\mbox{.}
$$

\begin{figure}
\begin{centering}
\includegraphics[scale=1]{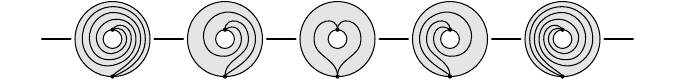}
\caption{A portion of the flip-graph of a cylinder without punctures and one marked point on each boundary.}\label{PP4.sec.1.fig.5}
\end{centering}
\end{figure}

With this estimate, we can prove the following theorem that gives an explicit exponential lower bound on $\Delta_k(\Sigma)$ and therefore implies Theorem \ref{PP4.sec.0.thm.1}.

\begin{thm}\label{PP4.sec.2.thm.1}
If $b$ and $p$ are not both equal to $0$ and $2g+p+b\geq3$, then for every integer $k$ greater than or equal to $2$,
$$
\Delta_k(\Sigma)>\!\left(1-\frac{1}{k}\right)\!\sqrt{\frac{2}{k\pi}}2^ke^{-\frac{1}{4(k-1)}}
$$
except possibly when
\begin{itemize}
\item[(i)] $\Sigma$ is the $4$-punctured sphere ($g=b=0$ and $p=4$),
\item[(ii)] $\Sigma$ is a genus one surface with a unique boundary component and no puncture ($g=b=1$ and $p=0$), or
\item[(iii)] $\Sigma$ is a $2$-punctured disk ($g=0$, $b=1$, and $p=2$).
\end{itemize}
\end{thm}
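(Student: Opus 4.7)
The plan is to reduce the theorem to an explicit combinatorial count on $\mathcal{F}(\Gamma)\mathord{\times}\mathcal{F}(\Gamma)$ via strong convexity, and then dispatch the hypotheses through a case analysis based on Lemmas~\ref{PP4.sec.2.lem.2}--\ref{PP4.sec.2.lem.7}.

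The first step is a transfer principle: if $\mathcal{G}$ is a strongly convex subgraph of $\mathcal{F}(\Sigma)$, then any two vertices of $\mathcal{G}$ are at the same distance in $\mathcal{G}$ as in $\mathcal{F}(\Sigma)$, and admit the same number of geodesics in both graphs, because every $\mathcal{F}(\Sigma)$-geodesic between them already lies in $\mathcal{G}$. Consequently $\Delta_k(\Sigma) \geq \Delta_k(\mathcal{G})$ whenever $\mathcal{G}$ contains a pair of vertices at distance $k$.

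The second step uses the explicit description of $\mathcal{F}(\Gamma)\mathord{\times}\mathcal{F}(\Gamma)$ as the standard grid on $\mathbb{Z}^2$, together with the Stirling estimate prepared just before the statement. Between the origin and the vertex $(\lfloor k/2\rfloor,\lceil k/2\rceil)$ there are exactly $\binom{k}{\lfloor k/2\rfloor}$ geodesics, bounded below by the quantity appearing in the statement,
$$\left(1-\frac{1}{k}\right)\sqrt{\frac{2}{k\pi}}\,2^k e^{-\frac{1}{4(k-1)}}.$$
Combined with the transfer principle, this reduces the theorem to exhibiting, in every non-exceptional case, a strongly convex subgraph of $\mathcal{F}(\Sigma)$ isomorphic to $\mathcal{F}(\Gamma)\mathord{\times}\mathcal{F}(\Gamma)$.

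The final step is a case analysis on $(g,p,b)$ subject to $b+p\geq 1$ and $2g+p+b\geq 3$: Lemma~\ref{PP4.sec.2.lem.3} handles $g\geq 2$; Lemma~\ref{PP4.sec.2.lem.5} handles $g\geq 1$ together with $b\geq 2$ or with $b=1$ and $p\geq 1$; Lemma~\ref{PP4.sec.2.lem.7} handles $g=1$, $b=0$, $p\geq 2$; Lemma~\ref{PP4.sec.2.lem.4} handles $g=0$ with $b\geq 3$ or with $b=2$ and $p\geq 1$; Lemma~\ref{PP4.sec.2.lem.6} handles $g=0$, $b=1$, $p\geq 3$; and Lemma~\ref{PP4.sec.2.lem.2} sweeps up any remaining case with $p\geq 5$. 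The main obstacle---though not a deep one---is to check that this partition is exhaustive outside the three listed exceptions; the geometric content has been fully absorbed into the preceding lemmas and the combinatorial content into the Stirling bound, so what remains is careful bookkeeping over the low-complexity parameter range.
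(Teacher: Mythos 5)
Your proposal is essentially the paper's own proof written out in full: the paper proves Theorem~\ref{PP4.sec.2.thm.1} precisely by citing Lemmas~\ref{PP4.sec.2.lem.2}--\ref{PP4.sec.2.lem.7} together with the preceding count of geodesics in $\mathcal{F}(\Gamma)\mathord{\times}\mathcal{F}(\Gamma)$, and your transfer principle for strongly convex subgraphs plus the Stirling estimate are exactly the ingredients being invoked, so the reduction and the lower bound $\binom{k}{\lfloor k/2\rfloor}$ are fine. The one place where you are more explicit than the paper---the claim that the case analysis is exhaustive outside the three listed exceptions---is where the real subtlety sits, and as written it does not close: under the stated hypotheses ($b+p\geq 1$ and $2g+p+b\geq 3$) the triples $(g,b,p)=(1,0,1)$ and $(g,b,p)=(0,0,3)$ are not listed as exceptions, yet none of Lemmas~\ref{PP4.sec.2.lem.2}--\ref{PP4.sec.2.lem.7} applies to them; indeed the flip-graph of the once-punctured torus is a trivalent tree, so $\Delta_k=1$ there and the exponential bound genuinely fails, while the thrice-punctured sphere has a finite flip-graph, so the inequality cannot hold for all $k\geq 2$. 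This defect is shared by the paper's one-line proof (and arguably by the statement itself), so it is not something your route introduces, but the ``careful bookkeeping'' you defer is not a formality: apart from these two low-complexity cases, your partition by the lemmas covers the parameter range exactly as you describe.
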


\begin{proof}
 This is a direct consequence of Lemmas \ref{PP4.sec.2.lem.2} to \ref{PP4.sec.2.lem.7} and of the discussion above on the number of geodesics between the vertices of $\mathcal{F}(\Gamma)\mathord{\times}\mathcal{F}(\Gamma)$.
\end{proof}

\begin{rem}
When $g$, $p$, and $b$ grow, $\mathcal{F}(\Sigma)$ will admit as strongly convex subgraphs cartesian products of the form $\mathcal{F}(\Gamma)^r$ where $r$ is greater than $2$. In that case, the lower bound on $\Delta_k(\Sigma)$ provided by Theorem \ref{PP4.sec.2.thm.1} can be improved. However, the asymptotic behavior of $\Delta_k(\Sigma)$ will remain exponential as we pointed out in the introduction. 
\end{rem}

\section{General estimates}\label{PP4.sec.1.5}

For any triangulation $T$ of $\Sigma$ and any $k$ in $\mathbb{N}$, we consider the set $B_\Sigma(T,k)$ of all the triangulations in $\mathcal{F}(\Sigma)$ at distance at most $k$ from $T$. Equivalently, $B_\Sigma(T,k)$ is the ball of radius $k$ centered at $T$ in $\mathcal{F}(\Sigma)$. The number of triangulations contained in $B_\Sigma(T,k)$ is bounded above by $\kappa(\Sigma)^k$ and we denote by $\Lambda_k(\Sigma)$ the largest possible number of triangulations in any such ball:
$$
\Lambda_k(\Sigma)=\max \Bigl\{ |B_\Sigma(T,k)|:T\in\mathcal{F}(\Sigma)\Bigr\}\mbox{.}
$$

We also denote by $\widetilde{\Delta}_k(\Sigma)$ the largest value of $\Delta_i(\Sigma)$ when $1\leq{i}\leq{k}$:
$$
\widetilde{\Delta}_k(\Sigma)=\mathrm{max}\bigl\{\Delta_i(\Sigma):1\leq{i}\leq{k}\bigr\}\mbox{.}
$$

The goal of this section is to prove the following theorem.

\begin{thm}\label{PP4.sec.1.5.thm.1}
For every positive integer $k$,
\begin{equation}\label{PP4.sec.1.5.thm.1.eq.1}
\Delta_k(\Sigma^\star)\leq\Delta_k(\Sigma)\leq\bigl(2\kappa(\Sigma)-4\bigr)^{(n-b)r}\Lambda_k(\Sigma^\star)^{2(r-1)}\widetilde{\Delta}_k(\Sigma^\star)^r
\end{equation}
where $r$ is equal to $\bigl(2\kappa(\Sigma)\bigr)^{n-b}$.
\end{thm}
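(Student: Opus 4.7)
The plan is to prove the two inequalities separately. The left inequality $\Delta_k(\Sigma^\star) \leq \Delta_k(\Sigma)$ is immediate from \lemref{PP4.sec.1.5.lem.1}: inside the strongly convex subgraph $G \subseteq \mathcal{F}(\Sigma)$ isomorphic to $\mathcal{F}(\Sigma^\star)$, every $\mathcal{F}(\Sigma)$-geodesic between two $G$-vertices remains in $G$, so distances and geodesic counts agree. Two vertices of $\mathcal{F}(\Sigma^\star)$ at distance $k$ realizing $\Delta_k(\Sigma^\star)$ geodesics therefore transport to two vertices of $\mathcal{F}(\Sigma)$ realizing the same number of geodesics between them.

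For the right inequality, I would proceed by induction on $n - b$, the number of non-loop boundary arcs of $\Sigma$. When $n = b$, every boundary arc is a boundary loop, so $\Sigma = \Sigma^\star$, $r = 1$, and the claim reduces to $\Delta_k(\Sigma) \leq \widetilde{\Delta}_k(\Sigma^\star)$, which holds by definition. For $n > b$, pick a non-loop boundary arc $\alpha$ and set $\Sigma' := \Sigma\contract\alpha$; note that $(\Sigma')^\star = \Sigma^\star$, $\kappa(\Sigma') = \kappa(\Sigma) - 2$, and $n(\Sigma') - b = (n-b) - 1$, so the inductive hypothesis applies to $\Sigma'$ (substituting $\kappa(\Sigma)$ for $\kappa(\Sigma')$ by monotonicity of the bound in $\kappa$) and in particular controls $\widetilde{\Delta}_k(\Sigma')$ in terms of $r' := r/(2\kappa(\Sigma)) = (2\kappa(\Sigma))^{n-b-1}$.

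The core of the induction is a one-step estimate
$$\Delta_k(\Sigma) \leq \bigl(2\kappa(\Sigma)-4\bigr)^{2\kappa(\Sigma)}\, \Lambda_k(\Sigma^\star)^{2(2\kappa(\Sigma)-1)}\, \widetilde{\Delta}_k(\Sigma')^{2\kappa(\Sigma)}\mbox{.}$$
To derive it, I would take $T, T'$ at distance $k$ realizing $\Delta_k(\Sigma)$ and, for every geodesic between them, record the sequence of triangles of its intermediate triangulations incident to $\alpha$. This sequence changes exactly at the $\nu$ flips incident to $\alpha$, partitioning the geodesic into $\nu + 1$ plateaus along which the triangle at $\alpha$ is fixed. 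Each plateau lives in a subgraph of $\mathcal{F}(\Sigma)$ which, by the strong convexity results of~\cite{DisarloParlier2019}, is strongly convex and naturally identified with $\mathcal{F}(\Sigma')$; meanwhile \lemref{PP4.sec.1.lem.1} ensures the contracted walk has length $k - \nu$ and hence stays in the ball of radius $k$ around $T\contract\alpha$ in $\mathcal{F}(\Sigma')$. Encoding each geodesic by (i) its contracted plateau-boundary data — at most $\Lambda_k(\Sigma^\star)$ admissible choices per checkpoint after reducing $\mathcal{F}(\Sigma')$-balls to $\mathcal{F}(\Sigma^\star)$-balls through the induction — (ii) the specific flip at each of the $\nu$ boundary transitions (at most $2\kappa(\Sigma) - 4$ options), and (iii) the plateau geodesics themselves (at most $\widetilde{\Delta}_k(\Sigma')$ each), together with a uniform bound $\nu + 1 \leq 2\kappa(\Sigma)$ on the number of plateaus, produces the displayed one-step bound.

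Substituting the inductive bound on $\widetilde{\Delta}_k(\Sigma')$ into this one-step estimate and tracking exponents confirms the theorem: the $(2\kappa(\Sigma) - 4)$-exponent telescopes to $2\kappa(\Sigma) + (n-b-1)r \leq (n-b)r$ (using $2\kappa(\Sigma) \leq r$); the $\Lambda_k(\Sigma^\star)$-exponent telescopes to $2(2\kappa(\Sigma) - 1) + 2(r' - 1)\cdot 2\kappa(\Sigma) = 2(r - 1)$; and the $\widetilde{\Delta}_k(\Sigma^\star)$-exponent telescopes to $r' \cdot 2\kappa(\Sigma) = r$, matching the claim exactly. The main obstacle will be the one-step estimate itself: establishing the uniform bound $\nu + 1 \leq 2\kappa(\Sigma)$ on the number of plateaus in a geodesic at $\alpha$, identifying the fixed-triangle subgraphs with $\mathcal{F}(\Sigma')$ through~\cite{DisarloParlier2019}, and reducing the $\Lambda_k(\Sigma')$-type counts to $\Lambda_k(\Sigma^\star)$ in a manner compatible with the induction.
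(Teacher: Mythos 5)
Your overall architecture matches the paper's: the left inequality via the strongly convex copy of $\mathcal{F}(\Sigma^\star)$ from \lemref{PP4.sec.1.5.lem.1}, then induction on the number of non-loop boundary arcs, contracting one such arc $\alpha$, splitting each geodesic into plateaus between the flips incident to $\alpha$, and encoding a geodesic by its checkpoints, the transition flips, and the plateau geodesics (this is the paper's \lemref{PP4.sec.1.5.lem.4}, combined with a ball-count comparison and the same exponent bookkeeping). However, there is a genuine gap exactly where you flag ``the main obstacle'': the uniform bound $\nu+1\leq 2\kappa(\Sigma)$ on the number of plateaus is not a technicality to be checked later, it is the mathematical heart of the theorem, and you give no argument for it. A priori a geodesic of length $k$ could contain on the order of $k$ flips incident to $\alpha$, in which case your encoding has exponentially many (in $k$) admissible codes and the one-step estimate degenerates to the trivial exponential bound of Proposition \ref{PP4.sec.1.prop.1}. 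The paper proves this boundedness in \thmref{PP4.sec.1.5.lem.2}, and the proof is not formal: it needs \lemref{PP4.sec.1.5.lem.1.2}, i.e.\ the statement $d\bigl(T,\psi_{x,\alpha}^{-1}(T\contract\alpha)\bigr)\leq\mathrm{deg}_T(x)-2$ for the isometric embedding $\psi_{x,\alpha}$ of $\mathcal{F}(\Sigma\contract\alpha)$ into $\mathcal{F}(\Sigma)$ (itself proved by an induction on the degree at $x$, with a case analysis of how a flip incident to $\alpha$ changes that degree), after which one combines \lemref{PP4.sec.1.lem.1}, the choice of the endpoint $x$ with $\mathrm{deg}_T(x)+\mathrm{deg}_{T'}(x)\leq 2\kappa(\Sigma)$, and the triangle inequality. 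None of this is present or even sketched in your proposal.

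A second, smaller issue: you propose to pass from counts of checkpoints in balls of $\mathcal{F}(\Sigma)$ or $\mathcal{F}(\Sigma\contract\alpha)$ to $\Lambda_k(\Sigma^\star)$ ``through the induction'', but your inductive hypothesis is a statement about $\Delta$, not about $\Lambda$, so it cannot deliver this reduction. The paper handles it by a separate induction (\lemref{PP4.sec.1.5.lem.3}), resting on the observation that the contraction map $T'\mapsto T'\contract\alpha$ is at most $2\kappa(\Sigma\contract\alpha)$-to-one on balls, which yields $\Lambda_k(\Sigma)\leq\bigl(2\kappa(\Sigma)-4\bigr)^{n-b}\Lambda_k(\Sigma^\star)$. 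Your exponent telescoping at the end is consistent, so once you supply proofs of the two missing ingredients (the uniform bound on $\alpha$-incident flips along geodesics and the ball-count comparison), the argument would close along essentially the paper's lines; as written, though, these are asserted rather than proved.
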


The first inequality in (\ref{PP4.sec.1.5.thm.1.eq.1}) is easily obtained. Recall that a subgraph $G'$ of a graph $G$ is strongly convex when all the geodesic paths in $G$ between any two vertices of $G'$ remain in $G'$. Hence, we immediately get the inequality $\Delta_k(\Sigma^\star)\leq\Delta_k(\Sigma)$ stated by Theorem \ref{PP4.sec.1.5.thm.1} from Lemma \ref{PP4.sec.1.5.lem.1}.

Hence, we turn our attention to proving the second inequality in (\ref{PP4.sec.1.5.thm.1.eq.1}). As a first step, we show that when a boundary arc of $\Sigma$ is not a boundary loop, the number of flips incident to this arc along any geodesic path of length $k$ in $\mathcal{F}(\Sigma)$ can be bounded above independently from $k$. In order to show this, consider a boundary arc $\alpha$ of $\Sigma$ that is not a boundary loop and a vertex $x$ of $\alpha$. Let $\beta$ be the boundary arc of $\Sigma$ other than $\alpha$ that admits $x$ as a vertex. Consider the only triangle $t$ contained in $\Sigma$ that is incident to both $\alpha$ and $\beta$ and observe that $t$ exists precisely because $\alpha$ is not a boundary loop. Now denote by $\varepsilon$ the edge of $t$ that is distinct from $\alpha$ and $\beta$. We will denote by $\mathcal{F}_x(\Sigma)$ the subgraph induced in $\mathcal{F}(\Sigma)$ by the triangulations that admit $\varepsilon$ as an arc. Equivalently, $\mathcal{F}(\Sigma)$ is made up of all the triangulations of $\Sigma$ that do not contain any interior arc incident to $x$. Observe that the map $\psi_{x,\alpha}:\mathcal{F}_x(\Sigma)\rightarrow\mathcal{F}(\Sigma\contract\alpha)$ that sends a triangulation $T$ in $\mathcal{F}_x(\Sigma)$ to $T\contract\alpha$ is an isomorphism (and therefore, since we think of flip-graphs as metric spaces, an isometry). 

In the sequel, we denote by $\mathrm{deg}_T(x)$ the number of incidences between $x$ and the arcs of $T$. It will be important to keep in mind that any arc in $T$ that is twice incident to $x$ contributes $2$ to $\mathrm{deg}_T(x)$.

\begin{lem}\label{PP4.sec.1.5.lem.1.2}
Consider a boundary arc $\alpha$ of $\Sigma$ and a vertex $x$ of $\alpha$. If $\alpha$ is not a loop then, for any triangulation $T$ of $\Sigma$,
$$
d\bigl(T,\psi_{x,\alpha}^{-1}(T\contract\alpha)\bigr)\leq\mathrm{deg}_T(x)-2\mbox{.}
$$
\end{lem}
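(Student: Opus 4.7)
The plan is to induct on $\mathrm{deg}_T(x)$, writing $T^\star := \psi_{x,\alpha}^{-1}(T\contract\alpha)$ for the target. The base case $\mathrm{deg}_T(x) = 2$ is immediate: $T$ has no interior arc at $x$, so $T \in \mathcal{F}_x(\Sigma)$ and $T = T^\star$, giving $d(T,T^\star) = 0$.

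For the inductive step, I would construct, starting from $T$, a sequence of $n$ flips each incident to $\alpha$ that reaches some $T_n$ with $\mathrm{deg}_{T_n}(x) \leq \mathrm{deg}_T(x) - n$. Since flips incident to $\alpha$ preserve $T\contract\alpha$, the target $T^\star$ is the same for $T_n$, and the inductive hypothesis applied to $T_n$ closes the bound via the triangle inequality. The natural candidate flip is on $\gamma$, the side of the triangle $t_\alpha \subset T$ containing $\alpha$ that meets $\alpha$ at $x$. Writing $t'$ for the second triangle of $T$ adjacent to $\gamma$ and $w$ for its vertex opposite $\gamma$, a direct quadrilateral analysis shows that this flip replaces $\gamma$ by an arc between $a'$ and $w$, and splits into three cases: (i) $\gamma$ a loop at $x$, reducing $\mathrm{deg}(x)$ by $2$ when $w \neq x$ and by $1$ when $w = x$; (ii) $\gamma$ non-loop with the next interior arc at $x$ in cyclic order also non-loop, reducing $\mathrm{deg}(x)$ by $1$; (iii) $\gamma$ non-loop with the next arc a loop at $x$, leaving $\mathrm{deg}(x)$ unchanged while placing that loop as a side of the new $t_\alpha$.

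In cases (i) with $w \neq x$ and (ii) a single flip already gives the required rate. In the remaining subcases I would chain further flips incident to $\alpha$: each successive flip is now in case (i), since the new $t_\alpha$ acquires a loop at $x$ as a side, and whenever such a flip has $w = x$ the cascade moves strictly inward along a nested family of loops at $x$. Because $\Sigma$ is a finite surface, the nesting cannot continue indefinitely and the cascade terminates with a case (i) flip having $w \neq x$. A step-by-step accounting over any such chunk of $n$ flips shows that the total drop in $\mathrm{deg}(x)$ is at least $n$, exactly what the induction needs. The main obstacle is the bookkeeping in case (iii) together with the cascade analysis: verifying that the initial flip really does place the next loop as a side of the new $t_\alpha$, carefully tracking the cyclic order at $x$ across successive flips, and formalizing the strictly nested invariant that ensures the cascade terminates on the finite surface $\Sigma$.
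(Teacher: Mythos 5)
Your construction is the paper's: you flip the same arc $\gamma$ (the side of the triangle at $\alpha$ not meeting the other endpoint of $\alpha$), you correctly observe that such flips leave $T\contract\alpha$ unchanged so the target $\psi_{x,\alpha}^{-1}(T\contract\alpha)$ is preserved, and your three cases match the paper's four-way analysis of whether $\gamma$ and its successor are twice incident to $x$. The difference is the accounting. You induct on the raw quantity $\mathrm{deg}_T(x)$, which forces you to amortize: your case (iii), where the degree does not drop at all, must be paid for later by a flip that drops it by $2$, and this is what generates your cascade of nested loops at $x$ and the termination argument you flag as the main obstacle. The paper sidesteps all of this with a corrected potential: it sets $i(T)=\mathrm{deg}_T(x)$ when the triangle incident to $\alpha$ has no edge twice incident to $x$ and $i(T)=\mathrm{deg}_T(x)-1$ otherwise, and checks in each of the four configurations that a single flip decreases $i$ by exactly one. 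Since $i(T)\leq\mathrm{deg}_T(x)$ and $i(T)=2$ exactly on $\mathcal{F}_x(\Sigma)$, a one-step induction on $i$ closes the bound with no cascade and no nesting invariant. Your amortized version does work --- after a drop-$0$ flip the new active edge is indeed a loop at $x$ already present as a side of the far triangle, each subsequent flip on a loop drops the degree by $1$ or $2$, and successive loops enclose strictly fewer triangles so the chain must end with a drop of $2$ --- but the potential $i$ is the cheaper way to package exactly the observation your cascade is chasing, and it is worth internalizing as a general device for this kind of argument.
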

\begin{proof}
Consider a triangulation $T$ of $\Sigma$. Consider the triangle $t$ of $T$ that admits $\alpha$ as one of its edges and denote
$$
i(T)=
\left\{
\begin{array}{l}
\mathrm{deg}_T(x)\mbox{ if $t$ does not have an edge twice incident to $x$ and}\\
\mathrm{deg}_T(x)-1\mbox{ if $t$ has an edge twice incident to $x$.}\\
\end{array}
\right.
$$

First observe that $i(T)$ is at least $2$ because $\alpha$ is not a loop. Indeed, in that case, $x$ must be incident to two boundary arcs of $\Sigma$ (none of whose is a boundary loop). We will prove by induction on $i(T)$ that
\begin{equation}\label{PP4.sec.1.5.lem.1.2.eq.0}
d\bigl(T,\psi_{x,\alpha}^{-1}(T\contract\alpha)\bigr)\leq{i(T)-2}\mbox{.}
\end{equation}

As $i(T)$ is at most $\mathrm{deg}_T(x)$, the desired result will follow. Observe that, if $i(T)$ is equal to $2$, then $T$ belongs to $\mathcal{F}_x(\Sigma)$. Therefore, $T$ coincides with $\psi_{x,\alpha}^{-1}(T\contract\alpha)$, which proves the base case for the induction.

Now assume that $i(T)$ is at least $3$. Denote by $y$ the vertex of $\alpha$ distinct from $x$ and by $\gamma$ the edge of $t$ that is not incident to $y$. Note that $\gamma$ is a flippable interior arc of $T$. Moreover, $\gamma$ is the only edge of $t$ that can possibly be twice incident to $x$ because $y$ is distinct from $x$. Denote by $T'$ the triangulation of $\Sigma$ obtained by flipping $\gamma$ in $T$. Since $T$ and $T'$ are related by a flip incident to $\alpha$, the triangulations $T'\contract\alpha$ and $T\contract\alpha$ coincide. In particular,
\begin{equation}\label{PP4.sec.1.5.lem.1.2.eq.1}
d\bigl(T,\psi_{x,\alpha}^{-1}(T\contract\alpha)\bigr)\leq{d\bigl(T',\psi_{x,\alpha}^{-1}(T'\contract\alpha)\bigr)+1}\mbox{.}
\end{equation}

We shall now prove that
\begin{equation}\label{PP4.sec.1.5.lem.1.2.eq.1.5}
i(T')=i(T)-1\mbox{.}
\end{equation}

\begin{figure}
\begin{centering}
\includegraphics[scale=1]{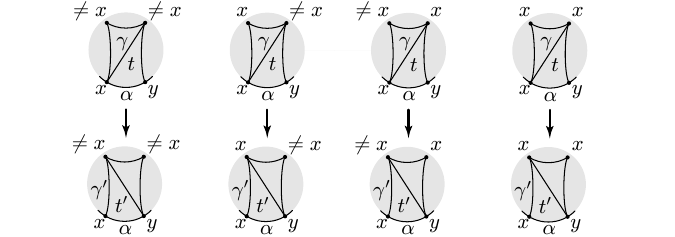}
\caption{The flip between $T$ and $T'$.}\label{PP4.sec.1.fig.6}
\end{centering}
\end{figure}

Denote by $t'$ the triangle of $T'$ incident to $\alpha$ and by $\gamma'$ the edge of $t'$ that is not incident to $y$. The flip between $T$ and $T'$ is shown in Figure \ref{PP4.sec.1.fig.6} depending on whether $\gamma$ and $\gamma'$ are twice incident to $x$ or just once. For instance, on the left of the figure, neither $\gamma$ nor $\gamma'$ are twice incident to $x$. In that case, $i(T)$ coincides with $\mathrm{deg}_T(x)$ and $i(T')$ with $\mathrm{deg}_{T'}(x)$. Hence, (\ref{PP4.sec.1.5.lem.1.2.eq.1.5}) follows from the observation that $\mathrm{deg}_{T'}(x)$ is less than $\mathrm{deg}_T(x)$ by exactly one. The situation depicted next on Figure \ref{PP4.sec.1.fig.6} is when $\gamma$ is not twice incident to $x$ but $\gamma'$ is. Therefore, $i(T)$ is equal to $\mathrm{deg}_T(x)$ and $i(T')$ to $\mathrm{deg}_{T'}(x)-1$. However, here $\mathrm{deg}_{T'}(x)$ is equal to $\mathrm{deg}_T(x)$ and we recover (\ref{PP4.sec.1.5.lem.1.2.eq.1.5}) as well. In the situation depicted third on the figure, $\gamma$ is twice incident to $x$ and $\gamma'$ only once. Hence, $i(T)$ is equal to $\mathrm{deg}_T(x)-1$ and $i(T')$ to $\mathrm{deg}_{T'}(x)$. As in this case, $\mathrm{deg}_{T'}(x)$ is less than $\mathrm{deg}_T(x)$ by two, (\ref{PP4.sec.1.5.lem.1.2.eq.1.5}) holds again. In the situation shown last, $\gamma$ and $\gamma'$ are both twice incident to $x$ and $i(T')-i(T)$ is equal to $\mathrm{deg}_{T'}(x)-\mathrm{deg}_{T'}(x)$. As $\mathrm{deg}_{T'}(x)$ is less than $\mathrm{deg}_T(x)$ by one, this proves (\ref{PP4.sec.1.5.lem.1.2.eq.1.5}), as announced. Therefore, by induction,
\begin{equation}\label{PP4.sec.1.5.lem.1.2.eq.2}
d\bigl(T',\psi_{a,\alpha}^{-1}(T'\contract\alpha)\bigr)\leq{i(T')-2}\mbox{.}
\end{equation}

Combining (\ref{PP4.sec.1.5.lem.1.2.eq.1}), (\ref{PP4.sec.1.5.lem.1.2.eq.1.5}), and (\ref{PP4.sec.1.5.lem.1.2.eq.2}) proves (\ref{PP4.sec.1.5.lem.1.2.eq.0}), as desired.
\end{proof}

\begin{thm}\label{PP4.sec.1.5.lem.2}
Consider a boundary arc $\alpha$ of $\Sigma$. If $\alpha$ is not a loop, then at most $2\kappa(\Sigma)-4$ flips are incident to $\alpha$ along any geodesic path in $\mathcal{F}(\Sigma)$.
\end{thm}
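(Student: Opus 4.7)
The plan is to apply Lemma \ref{PP4.sec.1.5.lem.1.2} at \emph{both} endpoints of $\alpha$ simultaneously, combine the resulting upper bounds on $d(T,T')$ with the lower bound coming from Lemma \ref{PP4.sec.1.lem.1}, and then average the two inequalities so as to replace degree quantities by $\kappa(\Sigma)$.

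Concretely, let $x$ and $y$ denote the two vertices of $\alpha$; they are distinct because $\alpha$ is not a boundary loop. Fix a geodesic in $\mathcal{F}(\Sigma)$ between triangulations $T$ and $T'$ along which exactly $\nu$ flips are incident to $\alpha$. For each $v\in\{x,y\}$, set $U_v=\psi_{v,\alpha}^{-1}(T\contract\alpha)$ and $U'_v=\psi_{v,\alpha}^{-1}(T'\contract\alpha)$. Lemma \ref{PP4.sec.1.5.lem.1.2} gives $d(T,U_v)\leq\mathrm{deg}_T(v)-2$ and $d(T',U'_v)\leq\mathrm{deg}_{T'}(v)-2$, and since $\psi_{v,\alpha}$ is an isometry, $d(U_v,U'_v)=d(T\contract\alpha,T'\contract\alpha)$. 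The triangle inequality along $T\to U_v\to U'_v\to T'$ therefore yields
$$
d(T,T')\leq\mathrm{deg}_T(v)+\mathrm{deg}_{T'}(v)-4+d(T\contract\alpha,T'\contract\alpha).
$$
On the other hand, Lemma \ref{PP4.sec.1.lem.1} provides the matching lower bound $d(T,T')\geq d(T\contract\alpha,T'\contract\alpha)+\nu$, so comparing the two gives
$$
\nu\leq\mathrm{deg}_T(v)+\mathrm{deg}_{T'}(v)-4\quad\text{for each }v\in\{x,y\}.
$$

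To finish, I would bound $\mathrm{deg}_T(x)+\mathrm{deg}_T(y)$ (and likewise $\mathrm{deg}_{T'}(x)+\mathrm{deg}_{T'}(y)$) by $2\kappa(\Sigma)$. This is immediate: since $x\neq y$, each of the $\kappa(\Sigma)$ arcs of a triangulation contributes at most two endpoint-incidences counted in $\{x,y\}$ together, and summing yields $\mathrm{deg}_T(x)+\mathrm{deg}_T(y)\leq 2\kappa(\Sigma)$. Adding the two bounds on $\nu$ obtained above and dividing by two yields $\nu\leq 2\kappa(\Sigma)-4$, as required.

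The main subtle point is the symmetric use of both endpoints of $\alpha$. At a single endpoint, Lemma \ref{PP4.sec.1.5.lem.1.2} only gives a bound linear in $\mathrm{deg}(v)$, which can in principle be of the same order as $2\kappa(\Sigma)$ and so only produces $\nu\leq 4\kappa(\Sigma)-4$. The trick that tightens this is averaging the two inequalities across $x$ and $y$, exploiting the fact that a single arc cannot contribute more than $2$ to $\mathrm{deg}_T(x)+\mathrm{deg}_T(y)$ when $x\neq y$; this is precisely what converts the bound into one in terms of $\kappa(\Sigma)$.
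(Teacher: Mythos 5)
Your proposal is correct and follows essentially the same route as the paper: both combine Lemma \ref{PP4.sec.1.5.lem.1.2} with the isometry $\psi_{v,\alpha}$, the lower bound of Lemma \ref{PP4.sec.1.lem.1}, the triangle inequality, and the bound $\mathrm{deg}_T(x)+\mathrm{deg}_T(y)+\mathrm{deg}_{T'}(x)+\mathrm{deg}_{T'}(y)\leq4\kappa(\Sigma)$. The only cosmetic difference is that you write the inequality at both endpoints and average, whereas the paper chooses (without loss of generality) the endpoint $v$ with $\mathrm{deg}_T(v)+\mathrm{deg}_{T'}(v)\leq2\kappa(\Sigma)$ and argues at that single vertex, which is the same pigeonhole step in different form.
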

\begin{proof}
Let $x$ and $y$ be the vertices of $\alpha$ and consider two triangulations $T$ and $T'$ of $\Sigma$. As $\alpha$ is not a loop, $x$ and $y$ are distinct and
$$
\mathrm{deg}_T(x)+\mathrm{deg}_T(y)+\mathrm{deg}_{T'}(x)+\mathrm{deg}_{T'}(y)\leq4\kappa(\Sigma)\mbox{.}
$$

We can therefore assume without loss of generality that
\begin{equation}\label{PP4.sec.1.5.lem.2.eq.1}
\mathrm{deg}_T(x)+\mathrm{deg}_{T'}(x)\leq2\kappa(\Sigma)
\end{equation}
by, if needed, exchanging the labels of $x$ and $y$. Now consider a geodesic path between $T$ and $T'$ in $\mathcal{F}(\Sigma)$ and denote by $\nu$ the number of flips along it that are incident to $\alpha$. We shall prove that $\nu\leq2\kappa(\Sigma)-4$.

According to Lemma \ref{PP4.sec.1.lem.1},
$$
d(T\contract\alpha,T'\contract\alpha)+\nu\leq{d(T,T')}\mbox{.}
$$

Since $\psi_{x,\alpha}$ is an isometry, this can be rewritten as
\begin{equation}\label{PP4.sec.1.5.lem.2.eq.2}
d\bigl(\psi^{-1}_{x,\alpha}(T\contract\alpha),\psi^{-1}_{x,\alpha}(T'\contract\alpha)\bigr)+\nu\leq{d(T,T')}\mbox{.}
\end{equation}

According to Lemma \ref{PP4.sec.1.5.lem.1.2},
$$
\left\{
\begin{array}{l}
d\bigl(T,\psi_{x,\alpha}^{-1}(T\contract\alpha)\bigr)\leq\mathrm{deg}_T(x)-2\mbox{ and}\\[\smallskipamount]
d\bigl(T',\psi_{x,\alpha}^{-1}(T'\contract\alpha)\bigr)\leq\mathrm{deg}_{T'}(x)-2\mbox{.}\\
\end{array}
\right.
$$

Combining these inequalities with (\ref{PP4.sec.1.5.lem.2.eq.1}) yields
$$
d\bigl(T,\psi_{x,\alpha}^{-1}(T\contract\alpha)\bigr)+d\bigl(T',\psi_{x,\alpha}^{-1}(T'\contract\alpha)\bigr)\leq2\kappa(\Sigma)-4\mbox{.}
$$
Summing this with (\ref{PP4.sec.1.5.lem.2.eq.2}), one obtains that $2\kappa(\Sigma)-4-\nu$ is at least
$$
d\bigl(T,\psi_{x,\alpha}^{-1}(T\contract\alpha)\bigr)+d\bigl(\psi^{-1}_{x,\alpha}(T\contract\alpha),\psi^{-1}_{x,\alpha}(T'\contract\alpha)\bigr)+d\bigl(T',\psi_{x,\alpha}^{-1}(T'\contract\alpha)\bigr)-d(T,T')
$$
and it follows from the triangle inequality that $\nu\leq2\kappa(\Sigma)-4$.
\end{proof}

Recall that $n$ is the number of the boundary arcs of $\Sigma$ and $b$ is the number of its boundary components. The second ingredient for the proof of the second inequality in (\ref{PP4.sec.1.5.thm.1.eq.1}) is the following lemma whereby $\Lambda_k(\Sigma)$ only differs from $\Lambda_k(\Sigma^\star)$ by a factor that does not depend on $k$ but only on $\kappa(\Sigma)$, $n$, and $b$. 

\begin{lem}\label{PP4.sec.1.5.lem.3}
$\displaystyle\Lambda_k(\Sigma^\star)\leq\Lambda_k(\Sigma)\leq\bigl(2\kappa(\Sigma)-4\bigr)^{n-b}\Lambda_k(\Sigma^\star)$.
\end{lem}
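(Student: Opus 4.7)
The lower bound $\Lambda_k(\Sigma^\star)\leq\Lambda_k(\Sigma)$ is immediate from Lemma~\ref{PP4.sec.1.5.lem.1}: the strongly convex copy of $\mathcal{F}(\Sigma^\star)$ sitting inside $\mathcal{F}(\Sigma)$ preserves all distances, so any radius-$k$ ball in $\mathcal{F}(\Sigma^\star)$ injects with the same cardinality into a radius-$k$ ball in $\mathcal{F}(\Sigma)$. Taking the maximum over centers yields the inequality.

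For the upper bound, the plan is to induct on $n-b$. The base case $n=b$ is trivial, since then $\Sigma=\Sigma^\star$. For the inductive step, pick a non-loop boundary arc $\alpha$ of $\Sigma$ (which exists as soon as $n>b$) and set $\Sigma'=\Sigma\contract\alpha$. This $\Sigma'$ has $n-1$ boundary arcs, the same $b$ boundary components, strictly smaller arc complexity $\kappa(\Sigma')<\kappa(\Sigma)$, and $(\Sigma')^\star=\Sigma^\star$. It thus suffices to prove the single-step inequality
$$
\Lambda_k(\Sigma)\leq\bigl(2\kappa(\Sigma)-4\bigr)\Lambda_k(\Sigma'),
$$
since combining it with the induction hypothesis applied to $\Sigma'$ (and with $2\kappa(\Sigma')-4\leq 2\kappa(\Sigma)-4$) closes the induction.

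The single-step inequality will be established by analyzing the contraction map $c_\alpha\colon\mathcal{F}(\Sigma)\to\mathcal{F}(\Sigma')$ sending $T$ to $T\contract\alpha$. Lemma~\ref{PP4.sec.1.lem.1} (applied with $\nu=0$) shows that $c_\alpha$ is $1$-Lipschitz, so $c_\alpha\bigl(B_\Sigma(T,k)\bigr)\subseteq B_{\Sigma'}\bigl(c_\alpha(T),k\bigr)$, a set of cardinality at most $\Lambda_k(\Sigma')$. The main remaining point is a bound on the fibers of $c_\alpha$: I will show that $|c_\alpha^{-1}(T^\star)|\leq 2\kappa(\Sigma)-4$ for every $T^\star\in\mathcal{F}(\Sigma')$. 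A triangulation $T$ in this fiber is reconstructed from $T^\star$ by un-contracting $\alpha$: one splits the merged vertex $y$ of $T^\star$ back into the two endpoints of $\alpha$, inserts $\alpha$ between them, and attaches a triangle along $\alpha$. The un-contraction is completely specified by the choice, among the interior-arc incidences of $T^\star$ at $y$, of the one that plays the role of the triangle edge preserved by the contraction (the arc labelled $\beta$ in Figure~\ref{PP4.sec.1.fig.3}); the cyclic order of arcs around $y$ then forces the remainder of the un-contracted triangulation. The boundary of $\Sigma'$ contributes exactly $2$ incidences to $y$ --- either as two distinct non-loop boundary arcs, or as a single boundary loop counted twice when the boundary component of $\Sigma$ containing $\alpha$ had only two marked points --- so the number of interior-arc incidences at $y$ is at most $\mathrm{deg}_{T^\star}(y)-2$. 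Finally, the identity $\sum_v\mathrm{deg}_{T^\star}(v)=2\kappa(\Sigma')$ gives $\mathrm{deg}_{T^\star}(y)\leq 2\kappa(\Sigma')\leq 2\kappa(\Sigma)-2$, so the claimed fiber bound $2\kappa(\Sigma)-4$ follows.

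The main obstacle is precisely the combinatorial verification that un-contractions are faithfully parametrized by this choice: one must check that distinct choices produce distinct preimages in $c_\alpha^{-1}(T^\star)$, that every element of the fiber is accounted for, and that all cases --- including the degenerate one where the two boundary arcs adjacent to $\alpha$ merge into a single boundary loop of $\Sigma'$, and the case where an interior arc at $y$ is itself a loop contributing two distinct incidences --- are handled correctly. Once this is in hand, combining the $1$-Lipschitz property with the fiber bound gives $|B_\Sigma(T,k)|\leq\bigl(2\kappa(\Sigma)-4\bigr)\Lambda_k(\Sigma')$, and the induction concludes the proof.
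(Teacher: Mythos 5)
Your overall strategy is exactly the paper's: the lower bound via the strongly convex copy of $\mathcal{F}(\Sigma^\star)$ from Lemma~\ref{PP4.sec.1.5.lem.1}, and the upper bound by induction on the number of non-loop boundary arcs, using that contraction is $1$-Lipschitz (Lemma~\ref{PP4.sec.1.lem.1} with $\nu=0$) and bounding the fibers of the contraction map. The one step that does not survive scrutiny is your parametrization of the fiber $c_\alpha^{-1}(T^\star)$ by the \emph{interior}-arc incidences of $T^\star$ at the merged vertex $y$. The surviving bigon edge $\beta$ of Figure~\ref{PP4.sec.1.fig.3} can perfectly well be a boundary arc of $\Sigma\contract\alpha$: this happens whenever the triangle of $T$ incident to $\alpha$ has a second boundary edge. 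Concretely, let $\Sigma$ be a disk with four marked points on its boundary and $\alpha$ one of its boundary arcs. Then $\Sigma\contract\alpha$ is a disk with three marked points, with a unique triangulation $T^\star$; both triangulations of $\Sigma$ lie in the fiber over $T^\star$, and in each case $\beta$ is one of the two boundary arcs of $T^\star$ incident to $y$. Meanwhile $y$ has no interior-arc incidences at all, so your count would bound this fiber by $0$ instead of $2$. So the claim you yourself flag as ``the main obstacle'' --- that every element of the fiber is accounted for by an interior-arc incidence --- is false as stated.

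The repair is precisely what the paper does: allow $\beta$ to be \emph{any} arc of $T^\star$ incident to $y$, counting arcs twice incident to $y$ twice, so that the fiber has at most $\mathrm{deg}_{T^\star}(y)$ elements. Since $\mathrm{deg}_{T^\star}(y)\leq 2\kappa(\Sigma\contract\alpha)$ and $\kappa(\Sigma\contract\alpha)=\kappa(\Sigma)-2$ exactly, this again yields the fiber bound $2\kappa(\Sigma)-4$, and your single-step inequality --- hence the whole induction --- goes through unchanged. (Your slack estimate $2\kappa(\Sigma')\leq 2\kappa(\Sigma)-2$ was in effect compensating for the spurious subtraction of the two boundary incidences; with the correct count neither the slack nor the subtraction is needed.) Everything else in the proposal matches the paper's argument.
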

\begin{proof}
According to Lemma \ref{PP4.sec.1.5.lem.1}, $\mathcal{F}(\Sigma)$ admits a copy of $\mathcal{F}(\Sigma^\star)$ as a strongly convex subgraph. It immediately follows that
$$
\Lambda_k(\Sigma^\star)\leq\Lambda_k(\Sigma)\mbox{.}
$$

We will prove the other inequality by induction on $n$. The base case is immediate because $\Sigma$ coincides with $\Sigma^\star$ when $n$ is equal to $b$. Assume that $n$ is greater than $b$. In that case, $\Sigma$ admits a boundary arc $\alpha$ that is not a loop. By Lemma \ref{PP4.sec.1.lem.1}, for any two triangulations $T$ and $T'$ of $\Sigma$,
$$
d(T\contract\alpha,T'\contract\alpha)\leq{d(T,T')}\mbox{.}
$$

Therefore, the map $T'\mapsto{T'\contract\alpha}$ sends $B_\Sigma(T,k)$ to a subset of $B_{\Sigma\contract\alpha}(T\contract\alpha,k)$. Now consider a triangulation $T'$ in $B_\Sigma(T,k)$. Recall that, when $\alpha$ is contracted within $T'$, then its two vertices are merged into a single vertex $a'$ and only one of the edges of the triangle $t$ of $T'$ incident to $\alpha$ remains in $T'\contract\alpha$ as shown in Figure \ref{PP4.sec.1.fig.3} where that edge is labeled $\beta$. We can recover all the triangulations $T''$ in $B_\Sigma(T,k)$ such that $T''\contract\alpha$ is equal to $T'\contract\alpha$ by reversing the process shown in Figure~\ref{PP4.sec.1.fig.3}. It suffices to pick any arc $\beta$ of $T'\contract\alpha$ incident to $a'$, to insert another arc $\beta'$ homotopic to $\beta$, thus creating a bigon bounded by $\beta\cup\beta'$ and then changing this bigon into a triangle by splitting its boundary at vertex $a'$ and connecting the two obtained copies of $a'$ with the boundary arc $\alpha$. Note that if $\beta$ is twice incident to $a'$, then the bigon can be completed into a triangle in two different ways because its boundary can be split at both of its vertices.

The number of triangulations in $B_\Sigma(T,k)$ that can be obtained by this reversed procedure is therefore the number of arcs in $T'\contract\alpha$ incident to $a'$ where the arcs twice incident to $a'$ are counted twice. This number is precisely $\mathrm{deg}_{T'\contract\alpha}(a')$ which is, in turn, not greater than $2\kappa(\Sigma\contract\alpha)$.

This shows that the map $T'\mapsto{T'\contract\alpha}$ sends at most $2\kappa(\Sigma\contract\alpha)$ triangulations from $B_\Sigma(T,k)$ to the same triangulation of $\Sigma\contract\alpha$ and as a consequence,
$$
|B_\Sigma(T,k)|\leq2\kappa(\Sigma\contract\alpha)|B_{\Sigma\contract\alpha}(T\contract\alpha,k)|\mbox{.}
$$

Picking, for $T$, a triangulation of $\Sigma$ such that $|B_\Sigma(T,k)|$ is equal to $\Lambda_k(\Sigma)$, this inequality can be rewritten as
$$
\Lambda_k(\Sigma)\leq2\kappa(\Sigma\contract\alpha)|B_{\Sigma\contract\alpha}(T\contract\alpha,k)|\mbox{.}
$$

Now observe that $\kappa(\Sigma\contract\alpha)$ is equal to $\kappa(\Sigma)-2$. As in addition $|B_{\Sigma\contract\alpha}(T\contract\alpha,k)|$ is not greater than $\Lambda_k(\Sigma\contract\alpha)$, we obtain
\begin{equation}\label{PP4.sec.1.5.lem.3.eq.1}
\Lambda_k(\Sigma)\leq\bigl(2\kappa(\Sigma)-4\bigr)\Lambda_k(\Sigma\contract\alpha)\mbox{.}
\end{equation}

However $\Sigma\contract\alpha$ has one boundary arc less than $\Sigma$. Therefore, by induction,
\begin{equation}\label{PP4.sec.1.5.lem.3.eq.2}
\Lambda_k(\Sigma\contract\alpha)\leq\bigl(2\kappa(\Sigma\contract\alpha)-4\bigr)^{n-1-b}\Lambda_k(\Sigma^\star)\mbox{.}
\end{equation}

As $\kappa(\Sigma\contract\alpha)\leq\kappa(\Sigma)$, combining (\ref{PP4.sec.1.5.lem.3.eq.1}) and (\ref{PP4.sec.1.5.lem.3.eq.2}) completes the proof.
\end{proof}

\begin{rem}
Note that, by the proof of Lemma \ref{PP4.sec.1.5.lem.3}, the upper bound it provides on $\Lambda_k(\Sigma)$ can immediately be improved to
$$
2^{n-b}\Lambda_k(\Sigma^\star)\prod_{i=1}^{n-b}\bigl(\kappa(\Sigma)-2i\bigr)
$$
when $n-b$ is greater than $1$ but we will not make use of this in the sequel.
\end{rem}

Just as in Lemma \ref{PP4.sec.1.5.lem.3}, the second inequality in (\ref{PP4.sec.1.5.thm.1.eq.1}) will be proven by induction on $n$. The third ingredient in this proof is the induction step, provided by the following lemma.

\begin{lem}\label{PP4.sec.1.5.lem.4}
If a boundary arc $\alpha$ of $\Sigma$ is not a loop, then for any positive $k$,
$$
\Delta_k(\Sigma)\leq\widetilde{\Delta}_k(\Sigma\contract\alpha)^{2\kappa(\Sigma)}\Lambda_k(\Sigma)^{2\kappa(\Sigma)}\mbox{.}
$$
\end{lem}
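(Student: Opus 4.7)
My plan is to decompose each length-$k$ geodesic from $T$ to $T'$ (with $T, T'$ realizing $\#(T, T') = \Delta_k(\Sigma)$) at its flips incident to $\alpha$, then encode each piece via contraction. By Theorem~\ref{PP4.sec.1.5.lem.2}, any such geodesic has at most $\nu \leq 2\kappa(\Sigma) - 4$ flips incident to $\alpha$. These flips split the geodesic into $\nu + 1$ consecutive sub-geodesics $\sigma_0, \ldots, \sigma_\nu$, each containing no $\alpha$-incident flip. Within each segment, the triangle incident to $\alpha$ is therefore constant, so its two non-$\alpha$ edges persist throughout $\sigma_i$.

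The crucial structural observation is that contraction $\contract \alpha$ is an isometric embedding on each $\sigma_i$. Indeed, writing $U_i$ and $V_i$ for the endpoints of $\sigma_i$, Lemma~\ref{PP4.sec.1.lem.1} gives $d(U_i \contract \alpha, V_i \contract \alpha) \leq d(U_i, V_i)$, while the projected sequence $\sigma_i \contract \alpha$ is a path of length $d(U_i, V_i)$ in $\mathcal{F}(\Sigma \contract \alpha)$; hence $\sigma_i \contract \alpha$ is a geodesic. Conversely, knowing $U_i$ pins down the triangle incident to $\alpha$, so inverse contraction applied to $\sigma_i \contract \alpha$ starting from $U_i$ recovers $\sigma_i$ uniquely step by step, because no flip in $\sigma_i$ is $\alpha$-incident and the two non-$\alpha$ edges of the triangle are thus never flipped.

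I would then encode each geodesic by the triangulations $U_1, \ldots, U_\nu$ immediately following each $\alpha$-incident flip (each lying in $B_\Sigma(T, k)$ and hence contributing at most $\Lambda_k(\Sigma)$ choices) together with the projected sub-geodesics $\sigma_0 \contract \alpha, \ldots, \sigma_\nu \contract \alpha$. The starting vertex of each $\sigma_i \contract \alpha$ is forced by the preceding data, its endpoint lies in a $k$-ball of $\mathcal{F}(\Sigma \contract \alpha)$, and there are at most $\widetilde{\Delta}_k(\Sigma \contract \alpha)$ geodesics between any two fixed endpoints. Bounding $\Lambda_k(\Sigma \contract \alpha)$ by a multiple of $\Lambda_k(\Sigma)$ via Lemma~\ref{PP4.sec.1.5.lem.3}, multiplying across the $\nu + 1 \leq 2\kappa(\Sigma)$ segments and $\nu \leq 2\kappa(\Sigma)$ intermediate triangulations, and summing over $\nu \in \{0, 1, \ldots, 2\kappa(\Sigma) - 4\}$ yields a bound of the form $\Lambda_k(\Sigma)^{2\kappa(\Sigma)} \widetilde{\Delta}_k(\Sigma \contract \alpha)^{2\kappa(\Sigma)}$ once the auxiliary constants are absorbed into the exponents.

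The main obstacle lies in the careful bookkeeping of exponents needed to reach the clean form of the lemma: the natural count of projected segments starting at a fixed vertex of $\mathcal{F}(\Sigma \contract \alpha)$ introduces an extra $\Lambda$-type factor from the choice of endpoint, and the summation over $\nu$ contributes an $O(\kappa(\Sigma))$ multiplicative overhead; both must be absorbed into the $2\kappa(\Sigma)$ exponents using Lemma~\ref{PP4.sec.1.5.lem.3} and loose exponent-bumping. The conceptual content — that projection $\contract \alpha$ turns segments without $\alpha$-incident flips into genuine geodesics of $\mathcal{F}(\Sigma \contract \alpha)$ from which the original segment can be reconstructed once $U_i$ is known — is the heart of the argument.
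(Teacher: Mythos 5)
Your proposal is correct and follows essentially the same route as the paper: decompose each geodesic at its $\alpha$-incident flips (at most $2\kappa(\Sigma)-4$ of them by Theorem~\ref{PP4.sec.1.5.lem.2}), record the intermediate triangulations as a tuple of points of $B_\Sigma(T,k)$, and bound the geodesics within each segment by $\widetilde{\Delta}_k(\Sigma\contract\alpha)$ using the injectivity of contraction on segments along which the triangle incident to $\alpha$ is constant. The only differences are bookkeeping (the paper records the triangulations just \emph{before} each $\alpha$-incident flip and absorbs the sum over the number of such flips via a geometric-series estimate, whereas your redundant endpoint factor and $O(\kappa(\Sigma))$ overhead are absorbed by loosening exponents), and these do not affect correctness.
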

\begin{proof}
Consider a non-loop boundary arc $\alpha$ of $\Sigma$ and consider two triangulations $T$ and $T'$ of $\Sigma$ whose distance in $\mathcal{F}(\Sigma)$ is equal to $k$ and such that $\#(T,T')$ is equal to $\Delta_k(\Sigma)$. Denote
$$
S=\!\!\!\bigcup_{m=0}^{2\kappa(\Sigma)-1}\!\!\!B_\Sigma(T,k)^m\mbox{.}
$$

We associate an element of $S$ to each geodesic in $\mathcal{F}(\Sigma)$ between $T$ and $T'$ as follows. Denote by $T_i$ the triangulation of $\Sigma$ at distance $i$ from $T$ along such a geodesic. Denote by $\phi(1)$ to $\phi(m)$ the indices such that the flip between $T_{\phi(i)}$ and $T_{\phi(i)+1}$ is incident to $\alpha$. We assume without loss of generality that $\phi$ is an increasing function of $i$. Observe that $m$ might be equal to $0$ if there are no flip incident to $\alpha$ along the considered geodesic. Moreover, it follows from Lemma~\ref{PP4.sec.1.5.lem.2} that $m$ is less than $2\kappa(\Sigma)$. If $m=0$, then the element $s$ of $S$ that we associate with the considered geodesic is the empty set (which is the only element of $B_\Sigma(T,k)^0$). Otherwise, we associate the geodesic with
\begin{equation}\label{PP4.sec.1.5.lem.4.eq.1}
s=(T_{\phi(1)}, T_{\phi(2)},\ldots,T_{\phi(m)})\mbox{.}
\end{equation}

Observe that some elements of $S$ may not be associated to any geodesic between $T$ and $T'$. For instance, if $T$ and $T'$ do not contain the same triangle incident to $\alpha$, then the empty set canot be associated to any geodesic between $T$ and $T'$ as all of them must contain a flip incident to $\alpha$. Now let us bound the number of geodesics between $T$ and $T'$ that are associated to $s$. If $s$ is the empty set, then the triangle incident to $s$ is the same in $T$ and in $T'$. Moreover, all the triangulations along all the geodesics associated to $s$ contain that triangle. Therefore, the number of geodesics associated to $s$ is at most the number of geodesics between $T\contract\alpha$ and $T'\contract\alpha$ which, in turn is at most $\widetilde{\Delta}_k(\Sigma\contract\alpha)$ because according to Lemma \ref{PP4.sec.1.lem.1}, $d(T\contract\alpha,T'\contract\alpha)\leq{k}$.

Now, if $s$ is given by (\ref{PP4.sec.1.5.lem.4.eq.1}), then observe that along any geodesic associated with $s$, the triangle incident to $\alpha$ in $T_{\phi(i)+1}$ and $T_{\phi(i+1)}$ must be the same. In particular, the flip that relates $T_{\phi(i)}$ and $T_{\phi(i)+1}$ is prescribed by $s$ when $i<m$ and by $T'$ when $i=m$. Therefore, the number of geodesics in $\mathcal{F}(\Sigma)$ between $T$ and $T'$ that are associated to $s$ is precisely
$$
\#\bigl(T,T_{\phi(1)}\bigr)\!\left[\prod_{i=1}^{m-1}\#\bigl(T_{\phi(i)+1},T_{\phi(i+1)}\bigr)\right]\!\#\bigl(T_{\phi(m)},T'\bigr)\mbox{.}
$$

However, since the triangle incident to $\alpha$ is not modified along the portion between $T_{\phi(i)+1}$ and $T_{\phi(i+1)}$ of any geodesic associated to $s$,
$$
\#\bigl(T_{\phi(i)+1},T_{\phi(i+1)}\bigr)\leq\#\bigl(T_{\phi(i)+1}\contract\alpha,T_{\phi(i+1)}\contract\alpha\bigr)\mbox{.}
$$

According to Lemma \ref{PP4.sec.1.lem.1}, the distance between $T_{\phi(i)+1}\contract\alpha$ and $T_{\phi(i+1)}\contract\alpha$ in $\mathcal{F}(\Sigma\contract\alpha)$ is at most $k$. As a consequence
$$
\#\bigl(T_{\phi(i)+1}\contract\alpha,T_{\phi(i+1)}\contract\alpha\bigr)\leq\widetilde{\Delta}_k(\Sigma\contract\alpha)
$$
and in turn,
$$
\#\bigl(T_{\phi(i)+1},T_{\phi(i+1)}\bigr)\leq\widetilde{\Delta}_k(\Sigma\contract\alpha)
$$

By the same argument, we obtain that $\#\bigl(T,T_{\phi(1)}\bigr)$ and $\#\bigl(T_{\phi(m)},T'\bigr)$ are also both bounded above by $\widetilde{\Delta}_k(\Sigma\contract\alpha)$. Therefore, the number of geodesics associated to $s$ is at most $\widetilde{\Delta}_k(\Sigma\contract\alpha)^{m+1}$. This shows that any element of $B_\Sigma(T,k)^m$ is associated with at most this many geodesics, and we can bound the total number of geodesics in $\mathcal{F}(\Sigma)$ between $T$ and $T'$ as
$$
\Delta_k(\Sigma)\leq\!\!\!\sum_{m=0}^{2\kappa(\Sigma)-1}\!\!\!|B_\Sigma(T,k)|^{m}\widetilde{\Delta}_k(\Sigma\contract\alpha)^{m+1}
$$

The right-hand side of this inequality can be rewritten as
$$
\widetilde{\Delta}_k(\Sigma\contract\alpha)\frac{|B_\Sigma(T,k)|^{2\kappa(\Sigma)}\widetilde{\Delta}_k(\Sigma\contract\alpha)^{2\kappa(\Sigma)}-1}{|B_\Sigma(T,k)|\widetilde{\Delta}_k(\Sigma\contract\alpha)-1}
$$

Note that, under the assumption that $k$ is positive, $\widetilde{\Delta}_k(\Sigma\contract\alpha)$ is positive and $|B_\Sigma(T,k)|$ is at least $2$. In particular, non only is the denominator of the above fraction non-zero, but is is also least $\widetilde{\Delta}_k(\Sigma\contract\alpha)$. As a consequence,
$$
\Delta_k(\Sigma)\leq|B_\Sigma(T,k)|^{2\kappa(\Sigma)}\widetilde{\Delta}_k(\Sigma\contract\alpha)^{2\kappa(\Sigma)}
$$

Since $|B_\Sigma(T,k)|$ is at most $\Lambda_k(\Sigma)$, this completes the proof.
\end{proof}

We are now ready to prove Theorem \ref{PP4.sec.1.5.thm.1}.

\begin{proof}[Proof of Theorem \ref{PP4.sec.1.5.thm.1}]
By Lemma \ref{PP4.sec.1.5.lem.1}, we only need to prove the second inequality from (\ref{PP4.sec.1.5.thm.1.eq.1}). As announced, we will prove it by induction on $n$. Note that the base case, when $n$ is equal to $b$, is immediate.

Assume that $n$ is greater than $b$. In that case, $\Sigma$ has a boundary arc $\alpha$ that is not a loop. According to Lemma \ref{PP4.sec.1.5.lem.4},
$$
\Delta_k(\Sigma)\leq\Delta_i(\Sigma\contract\alpha)^{2\kappa(\Sigma)}\Lambda_k(\Sigma)^{2\kappa(\Sigma)}
$$
where $i$ denotes an integer satisfying $1\leq{i}\leq{k}$. In turn, Lemma \ref{PP4.sec.1.5.lem.3} makes it possible to bound $\Lambda_k(\Sigma)$ above in terms of $\Lambda_k(\Sigma^\star)$ and we obtain
\begin{equation}\label{PP4.sec.1.5.thm.1.eq.1.5}
\Delta_k(\Sigma)\leq\bigl(2\kappa(\Sigma)-4\bigr)^{2(n-b)\kappa(\Sigma)}\Delta_i(\Sigma\contract\alpha)^{2\kappa(\Sigma)}\Lambda_k(\Sigma^\star)^{2\kappa(\Sigma)}
\end{equation}

By induction,
\begin{equation}\label{PP4.sec.1.5.thm.1.eq.2}
\Delta_i(\Sigma\contract\alpha)\leq\bigl(2\kappa(\Sigma\contract\alpha)-4\bigr)^{(n-b-1)r'}\Lambda_i(\Sigma^\star)^{2(r'-1)}\widetilde{\Delta}_i(\Sigma^\star)^{r'}
\end{equation}
where
$$
r'=\bigl(2\kappa(\Sigma\contract\alpha)\bigr)^{n-b-1}\mbox{.}
$$

Recall that $\kappa(\Sigma\contract\alpha)\leq\kappa(\Sigma)$. As in addition $\Lambda_i(\Sigma^\star)$ and $\widetilde{\Delta}_i(\Sigma^\star)$ are nondecreasing functions of $i$, (\ref{PP4.sec.1.5.thm.1.eq.2}) yields
$$
\Delta_i(\Sigma\contract\alpha)\leq\bigl(2\kappa(\Sigma)-4\bigr)^{(n-b-1)r'}\Lambda_k(\Sigma^\star)^{2(r'-1)}\widetilde{\Delta}_k(\Sigma^\star)^{r'}\mbox{.}
$$

Combining this with (\ref{PP4.sec.1.5.thm.1.eq.1.5}), one obtains
$$
\Delta_k(\Sigma)\leq\bigl(2\kappa(\Sigma)-4\bigr)^{2(n-b+(n-b-1)r')\kappa(\Sigma)}\Lambda_k(\Sigma^\star)^{2\kappa(\Sigma)(2r'-1)}\widetilde{\Delta}_k(\Sigma^\star)^{2\kappa(\Sigma)r'}\mbox{.}
$$

Denote
$$
r=\bigl(2\kappa(\Sigma)\bigr)^{n-b}
$$
and recall that $\kappa(\Sigma\contract\alpha)\leq\kappa(\Sigma)$. Hence, $2\kappa(\Sigma)r'\leq{r}$. Moreover,
$$
2\kappa(\Sigma)(2r'-1)\leq2r-2\kappa(\Sigma)\mbox{.}
$$

As $\kappa(\Sigma)$ is positive, the right-hand side of this inequality is at most $2(r-1)$ and we can therefore bound $\Delta_k(\Sigma)$ as
\begin{equation}\label{PP4.sec.1.5.thm.1.eq.2.5}
\Delta_k(\Sigma)\leq\bigl(2\kappa(\Sigma)-4\bigr)^{2(n-b+(n-b-1)r')\kappa(\Sigma)}\Lambda_k(\Sigma^\star)^{2(r-1)}\widetilde{\Delta}_k(\Sigma^\star)^r\mbox{.}
\end{equation}

As $2\kappa(\Sigma)r'\leq{r}$, the exponent of $2\kappa(\Sigma)-4$ in (\ref{PP4.sec.1.5.thm.1.eq.2.5}) can be bounded as
\begin{equation}\label{PP4.sec.1.5.thm.1.eq.3}
2\bigl(n-b+(n-b-1)r'\bigr)\kappa(\Sigma)\leq2(n-b)\kappa(\Sigma)+(n-b-1)r
\end{equation}

However, as $\kappa(\Sigma)$ and $n-b$ are both at least $1$, we have
$$
n-b\leq\bigl(2\kappa(\Sigma)\bigr)^{n-b-1}\mbox{.}
$$

Therefore, the first term in the right-hand side of (\ref{PP4.sec.1.5.thm.1.eq.3}) is at most $r$. Since $2\kappa(\Sigma)-4$ is at least $2$ (otherwise $\Sigma$ does not admit a triangulation), the desired upper bound on $\Delta_k(\Sigma)$ follows from (\ref{PP4.sec.1.5.thm.1.eq.2.5}).
\end{proof}

\section{Polynomial cases}\label{PP4.sec.3}

Recall that $\Gamma$ is a cylinder without punctures and exactly one marked point on each boundary component. As already mentioned at the end of Section \ref{PP4.sec.2}, $\mathcal{F}(\Sigma)$ is isomorphic to a bi-infinite simple path. We obtain the following statement as an immediate consequence of that observation.

\begin{prop}\label{PP4.sec.3.prop.1}
$\Delta_k(\Gamma)=1$ and $\Lambda_k(\Gamma)=2k+1$.
\end{prop}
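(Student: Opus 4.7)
The plan is to reduce everything to the structural description of $\mathcal{F}(\Gamma)$ that is quoted in the paragraph just before the proposition: $\mathcal{F}(\Gamma)$ is the bi-infinite simple path, that is, the graph whose vertex set can be identified with $\mathbb{Z}$ and whose edges connect pairs of consecutive integers. I would first recall this identification explicitly (and note that it has been established in \cite{ParlierPournin2017}), so that the proposition becomes a purely graph-theoretic statement about $\mathbb{Z}$ with the path metric.

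Once the identification $\mathcal{F}(\Gamma)\cong\mathbb{Z}$ is in hand, I would argue the first equality as follows. Pick any two triangulations $T$ and $T'$ of $\Gamma$ at distance $k$, which correspond to integers $m$ and $m'$ with $|m-m'|=k$. A geodesic between them in $\mathbb{Z}$ is a sequence of $k+1$ consecutive integers beginning at $m$ and ending at $m'$, and there is manifestly only one such sequence since at each step the unique choice that does not backtrack moves one step closer to $m'$. Hence $\#(T,T')=1$ for every such pair, and taking the maximum gives $\Delta_k(\Gamma)=1$.

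For the second equality I would similarly note that, under the identification with $\mathbb{Z}$, the ball $B_\Gamma(T,k)$ of radius $k$ centered at the integer $m$ is exactly the set $\{m-k,m-k+1,\dots,m+k\}$, which has cardinality $2k+1$. Since every ball of radius $k$ in $\mathbb{Z}$ has this same cardinality, the maximum $\Lambda_k(\Gamma)$ is $2k+1$.

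There is no real obstacle here beyond citing the description of $\mathcal{F}(\Gamma)$ correctly; the only substantive content of the proposition is this structural fact, and everything else is an elementary count on the integer lattice. I would therefore keep the proof to a couple of sentences, making sure to flag that the identification with $\mathbb{Z}$ is what does all the work.
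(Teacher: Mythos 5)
Your proof is correct and follows the same route as the paper, which deduces the proposition immediately from the fact (recalled from \cite{ParlierPournin2017}) that $\mathcal{F}(\Gamma)$ is a bi-infinite simple path, i.e.\ isomorphic to $\mathbb{Z}$ with the path metric, so that geodesics are unique and balls of radius $k$ contain exactly $2k+1$ vertices. Nothing further is needed.
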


Now observe that, when $\Sigma$ is a cylinder without punctures, $\Sigma^\star$ is equal to $\Gamma$. Hence, Theorem \ref{PP4.sec.1.5.thm.1} and Proposition \ref{PP4.sec.3.prop.1} provide Theorem \ref{PP4.sec.0.thm.2}. More precisely, we have the following statement.

\begin{thm}\label{PP4.sec.3.thm.1}
If $\Sigma$ is a cylinder without punctures, then 
$$
\Delta_k(\Sigma)\leq\bigl(2\kappa(\Sigma)-4\bigr)^{(n-b)r}(2k+1)^{2(r-1)}
$$
where $r$ is equal to $\bigl(2\kappa(\Sigma)\bigr)^{(n-b)}$.
\end{thm}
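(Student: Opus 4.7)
The plan is to derive this statement as an immediate specialization of Theorem~\ref{PP4.sec.1.5.thm.1}, using the explicit values for the flip-graph of $\Gamma$ provided by Proposition~\ref{PP4.sec.3.prop.1}. The first thing I would observe is that when $\Sigma$ is a cylinder without punctures, the surface $\Sigma^\star$ obtained by deleting all but one marked point on each boundary component is precisely $\Gamma$ (a cylinder with one marked point on each of its two boundary curves, and no puncture). Consequently, the right-hand side of inequality~(\ref{PP4.sec.1.5.thm.1.eq.1}) from Theorem~\ref{PP4.sec.1.5.thm.1} can be evaluated explicitly once the quantities $\Lambda_k(\Gamma)$ and $\widetilde{\Delta}_k(\Gamma)$ are known.

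Next I would invoke Proposition~\ref{PP4.sec.3.prop.1}, which tells us that $\Delta_k(\Gamma)=1$ for every $k$ and that $\Lambda_k(\Gamma)=2k+1$. Since $\widetilde{\Delta}_k(\Gamma)$ is by definition the maximum of $\Delta_i(\Gamma)$ over $1\leq i\leq k$, it equals $1$ as well. Plugging $\Sigma^\star=\Gamma$, $\Lambda_k(\Sigma^\star)=2k+1$ and $\widetilde{\Delta}_k(\Sigma^\star)=1$ into the right-hand side of~(\ref{PP4.sec.1.5.thm.1.eq.1}), the factor $\widetilde{\Delta}_k(\Sigma^\star)^r$ disappears, and one is left precisely with
$$
\Delta_k(\Sigma)\leq\bigl(2\kappa(\Sigma)-4\bigr)^{(n-b)r}(2k+1)^{2(r-1)},
$$
with $r=(2\kappa(\Sigma))^{n-b}$, which is the stated inequality.

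There is essentially no obstacle here: all the genuine work was done in Section~\ref{PP4.sec.1.5} (the inductive contraction argument of Theorem~\ref{PP4.sec.1.5.thm.1}) and in the elementary description of $\mathcal{F}(\Gamma)$ as a bi-infinite path yielding Proposition~\ref{PP4.sec.3.prop.1}. The only point that deserves being written out carefully is the identification $\Sigma^\star=\Gamma$, which uses the hypothesis that $\Sigma$ is a cylinder without punctures (so $b=2$, $g=p=0$ and each of the two boundary components is reduced to a single marked point in $\Sigma^\star$). Once this is noted, the proof is a one-line substitution.
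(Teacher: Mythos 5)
Your proposal is correct and is exactly the paper's argument: the paper also obtains this bound by noting that $\Sigma^\star=\Gamma$ for a cylinder without punctures and substituting $\Lambda_k(\Gamma)=2k+1$ and $\widetilde{\Delta}_k(\Gamma)=1$ (from Proposition~\ref{PP4.sec.3.prop.1}) into the inequality of Theorem~\ref{PP4.sec.1.5.thm.1}. Nothing is missing.
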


This case corresponds to the striped disk in Figure \ref{PP4.sec.1.fig.2}. Let us turn our attention to the case of the $2$-punctured disk. For the remainder of the section, we assume that $\Sigma$ is a $2$-punctured disk. According to Theorem \ref{PP4.sec.1.5.thm.1}, we only need to consider the $2$-punctured disk $\Sigma^\star$ with only one marked point on the boundary. Denote by $x$ and $y$ the two punctures of $\Sigma^\star$ and by $z$ the marked point on its boundary. Note that there is a unique arc with vertices $x$ and $y$ up to homotopy. We denote that arc by $\varepsilon$. Denote by $\mathcal{G}$ the subgraph of $\mathcal{F}(\Sigma^\star)$ induced by the triangulations that contain $\varepsilon$ and by $\mathcal{G}'$ its subgraph induced by the triangulations that do not contain $\varepsilon$. A portion of $\mathcal{F}(\Sigma^\star)$ is shown in Figure \ref{PP4.sec.1.fig.7}. 
\begin{figure}
\begin{centering}
\includegraphics[scale=1]{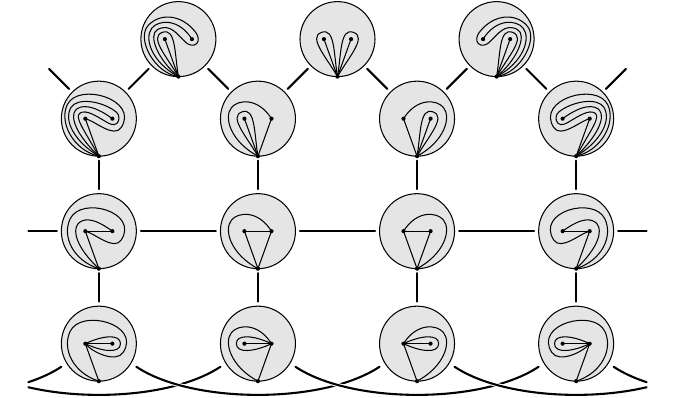}
\caption{A portion of the flip-graph of a $2$-punctured disk with one marked point in the boundary.}\label{PP4.sec.1.fig.7}
\end{centering}
\end{figure}
The seven triangulations in the two top rows of the figure are in $\mathcal{G}'$ and the eight in the two bottom rows in $\mathcal{G}$. Observe that $\mathcal{G}'$, contains two kinds of triangulations. The first kind of triangulations have two interior arcs twice incident to $z$ and two non-flippable interior arcs, one incident to $x$ and the other to $y$. Three such triangulations are shown in the top row of Figure \ref{PP4.sec.1.fig.7}. Each of them is adjacent in $\mathcal{F}(\Sigma^\star)$ to exactly two triangulations, both in $\mathcal{G}'$ and of the second kind. Four of these second kind triangulations are represented in the second row of  Figure \ref{PP4.sec.1.fig.7}. In these triangulations, there is a single interior arc twice incident to $z$. One of the punctures is incident to a non-flippable arc and the other two two flippable arcs. These triangulations are adjacent in $\mathcal{F}(\Sigma^\star)$ to two triangulations of the first kind and to exactly one triangulation in $\mathcal{G}$ since flipping the interior arc twice incident to $z$ introduces $\varepsilon$.

With this description, $\mathcal{G}'$ is, like $\mathcal{F}(\Gamma)$, a bi-infinite simple path because it is a connected infinite subgraph with all vertices of degree $2$. Moreover one can move from $\mathcal{G}'$ to $\mathcal{G}$ by a single possible flip in every other triangulation in that path, as shown in Figure \ref{PP4.sec.1.fig.7} and if two triangulations are far enough apart in $\mathcal{G}'$, it is faster to pass through $\mathcal{G}$. We get the following as a consequence.

\begin{prop}\label{PP4.sec.3.prop.2}
Consider two triangulations $T$ and $T'$ in $\mathcal{G}'$. If all the triangulations along a geodesic in $\mathcal{F}(\Sigma^\star)$ between $T$ and $T'$ belong to $\mathcal{G}'$, then the distance of $T$ and $T'$ in $\mathcal{F}(\Sigma^\star)$ is at most $6$.
\end{prop}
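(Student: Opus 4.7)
The plan is to combine two observations. The first is that, because $\mathcal{G}'$ is a bi-infinite simple path, any $\mathcal{F}(\Sigma^\star)$-geodesic between $T$ and $T'$ that stays in $\mathcal{G}'$ must coincide with the unique $\mathcal{G}'$-path between its endpoints. In particular, the hypothesis of the proposition forces
\[ d_{\mathcal{F}(\Sigma^\star)}(T, T') = d_{\mathcal{G}'}(T, T') =: D. \]
It is therefore enough to construct an $\mathcal{F}(\Sigma^\star)$-path through $\mathcal{G}$ of length at most $D/2 + 3$, since this will give $D \leq D/2 + 3$ and hence $D \leq 6$.

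The alternative path is built in three segments: enter $\mathcal{G}$ near $T$, travel inside $\mathcal{G}$, and exit near $T'$. Entering costs at most one flip if $T$ is second-kind (flipping the unique interior arc of $T$ twice incident to $z$ introduces $\varepsilon$) and at most two flips if $T$ is first-kind (first a flip to an adjacent second-kind vertex of $\mathcal{G}'$, then the previous move). Symmetric estimates apply near $T'$. The middle segment has length $d_{\mathcal{G}}(\sigma(V_T), \sigma(V_{T'}))$, where $\sigma$ sends a second-kind vertex $V$ of $\mathcal{G}'$ to its unique neighbor in $\mathcal{G}$ and $V_T, V_{T'}$ are the second-kind vertices chosen on either side of the path. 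A short case analysis on the types of $T$ and $T'$ then aggregates these costs into the bound $D/2 + 3$, with the worst case attained when both $T$ and $T'$ are first-kind (contributing the full four flips at the ends, together with a middle of length at most $(D-2)/2$).

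The main obstacle is the structural claim that any two consecutive second-kind vertices $V$ and $V'$ of $\mathcal{G}'$ (at $\mathcal{G}'$-distance $2$, separated by a first-kind vertex) satisfy $d_{\mathcal{G}}(\sigma(V), \sigma(V')) \leq 1$. Iterating this yields $d_{\mathcal{G}}(\sigma(V_1), \sigma(V_2)) \leq k$ whenever $V_1$ and $V_2$ are second-kind vertices of $\mathcal{G}'$ at $\mathcal{G}'$-distance $2k$, which is exactly what the middle segment estimate needs. I would verify the claim directly, by starting from a generic second-kind triangulation $V$, performing the two $\mathcal{G}'$-flips that reach the next second-kind triangulation $V'$, and then checking arc by arc that $\sigma(V)$ and $\sigma(V')$ differ by a single flip. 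The elementary move between consecutive second-kind triangulations of $\mathcal{G}'$ is essentially a half-twist of the arcs around one of the punctures, which, once $\varepsilon$ is present, should collapse to a single flip of an arc incident to a puncture. With this structural claim in hand, the three-segment construction above yields the desired bound and concludes the proof.
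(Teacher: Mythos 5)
Correct, and essentially the paper's argument: both proofs exploit that each second-kind vertex of $\mathcal{G}'$ has a unique neighbour in $\mathcal{G}$ and that the parallel route through $\mathcal{G}$ is roughly twice as fast, the paper running this as a contradiction on a length-$6$ window of the geodesic (a shortcut of length $5$ read off Figure~\ref{PP4.sec.1.fig.7}), you as the direct estimate $D\le D/2+3$ along the whole geodesic. Your key structural claim that consecutive second-kind vertices $V,V'$ of $\mathcal{G}'$ satisfy $d\bigl(\sigma(V),\sigma(V')\bigr)\le1$ is indeed true ($\sigma(V)$ and $\sigma(V')$ share three of their four interior arcs, hence differ by a single flip), and it is precisely the fact the paper reads off its figure.
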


\begin{proof}
Consider a geodesic path in $\mathcal{F}(\Sigma^\star)$ between $T$ and $T'$. Assume that all of the triangulations along that geodesic path belong to $\mathcal{G}'$. Now recall that $\mathcal{G}'$ is a simple path whose triangulation alternate between the two types represented in the first and second rows of Figure \ref{PP4.sec.1.fig.7}. Assume for contradiction that the distance of $T$ and $T'$ in $\mathcal{F}(\Sigma^\star)$ is at least seven. In particular, there are at least eight triangulations along the considered geodesic. Therefore, one can find a sequence of seven consecutive triangulations $T_1$ to $T_7$ in that geodesic that starts and ends with triangulations from the second row of Figure \ref{PP4.sec.1.fig.7}. In fact, we can assume without loss of generality that $T_1$ is the first triangulation of the second row in that figure and $T_7$ the last. As can be seen in the figure, $T_1$ and $T_7$ are connected by a path of length five via $\mathcal{G}$, which proves the lemma.
\end{proof}

We can now prove Theorem \ref{PP4.sec.0.thm.3}.

\begin{proof}[Proof of Theorem \ref{PP4.sec.0.thm.3}]
According to Theorem \ref{PP4.sec.1.5.thm.1}, it suffices to show that $\Lambda_k(\Sigma^\star)$ and $\Delta_k(\Sigma^\star)$ are at most polynomial functions in $k$.

It is easy to see in Figure \ref{PP4.sec.1.fig.7} that the number of triangulations in any ball of radius $k$ within $\mathcal{F}(\Sigma^\star)$ is a most a linear function of $k$: roughly speaking, this figure represents $\mathcal{F}(\Sigma^\star)$ as four infinite rows. In this representation, the edges of $\mathcal{F}(\Sigma^\star)$ allow to move within a given column or from one column to the next. In the bottom row, the edges allow to skip one column, but no edge allows to skip more than one column. As a consequence, the number of triangulations in a ball of radius $k$ within $\mathcal{F}(\Sigma^\star)$ is at most $8(2k+1)$ and so is $\Lambda_k(\Sigma^\star)$.

According to Proposition \ref{PP4.sec.3.prop.2}, the length of a geodesic path that remains outside of $\mathcal{G}$ is at most $6$. However, by \cite{DisarloParlier2019}, $\mathcal{G}$ is a strongly convex subgraph of $\mathcal{F}(\Sigma^\star)$. Therefore, only the first $6$ and the last $6$ steps along any geodesic in $\mathcal{F}(\Sigma^\star)$ can possibly be outside of $\mathcal{G}$. It then suffices to prove the theorem for the geodesic paths that are entirely contained within $\mathcal{G}$. Observe that $\mathcal{G}$ is isomorphic to the flip-graph of a cylinder without punctures with a single marked point on one boundary, and two on the other. (That cylinder is the one obtained by cutting $\Sigma$ along $\varepsilon$.) It then follows from Theorem \ref{PP4.sec.0.thm.2} that the number of geodesics in $\mathcal{F}(\Sigma^\star)$ between any two triangulations in $\mathcal{G}$ is at most a polynomial function of their distance as that distance goes to infinity.
\end{proof}

\bibliography{GettingLost}
\bibliographystyle{ijmart}

\end{document}